\newtheorem{theorem}{Theorem}[section]
\newtheorem{lemma}{Lemma}[section]
\newtheorem{remark}{Remark}[section]
\newtheorem{definition}{Definition}[section]
\begin{document}
%------------------------------------------------------------------------------
 
\title{Novel  mass-based  multigrid relaxation schemes for the Stokes equations}

\author{Yunhui He\thanks{Department of Computer Science, The University of British Columbia, Vancouver, BC, V6T 1Z4, Canada,   \tt{yunhui.he@ubc.ca}.}  }

\maketitle

\begin{abstract}
 In this work, we propose  three novel block-structured multigrid relaxation schemes based on distributive relaxation,  Braess-Sarazin relaxation, and Uzawa
relaxation, for solving the Stokes equations discretized by the mark-and-cell scheme. In our earlier work \cite{he2018local}, we  discussed these three types of relaxation schemes, where the weighted Jacobi iteration  is used for inventing the Laplacian involved in the Stokes equations. In \cite{he2018local}, we show that  the optimal smoothing factor is  $\frac{3}{5}$  for  distributive weighted-Jacobi relaxation and  inexact Braess-Sarazin relaxation,  and  is $\sqrt{\frac{3}{5}}$ for $\sigma$-Uzawa relaxation.  Here, we propose mass-based approximation inside of these three relaxations, where mass matrix $Q$ obtained from  bilinear finite element method is  directly used to approximate to the inverse of  scalar Laplacian operator instead of using Jacobi iteration. Using local Fourier analysis, we  theoretically derive the optimal smoothing factors for the resulting three relaxation schemes.  Specifically,  mass-based  distributive relaxation,  mass-based  Braess-Sarazin  relaxation, and mass-based $\sigma$-Uzawa relaxation have  optimal smoothing factor $\frac{1}{3}$, $\frac{1}{3}$ and  $\sqrt{\frac{1}{3}}$, respectively.  Note that the mass-based relaxation schemes do  not cost more than the original ones using Jacobi iteration. Another superiority  is that there is no need to compute the  inverse of a matrix. These new relaxation schemes are appealing.   
\end{abstract}

\vskip0.3cm {\bf Keywords.}
 
Stokes equations, local Fourier analysis, staggered finite-difference method, mass matrix, block-structured relaxation, multigrid
 
 \vspace{2mm}
 MSC:  65N55,  65F10

%===============================================================================
\section{Introduction}
\label{sec:intro}
 
 Multigrid methods \cite{MR1156079} are popular for numerical solution of  large classes of problems, such as the Stokes and  Navier-Stokes equations, see \cite{wittum1989multi, oosterlee2006multigrid,MR833993,MR3217219}, due to their   capability of solving problems with $N$ unknowns using $O(N)$ work and storage, leading to a substantial improvement in computational efficiency over direct methods.  In this work, we are interested in multigrid methods for the Stokes equations discretized by a staggered finite-difference method. To design fast multigrid methods, the choices of multigrid components, such as relaxation schemes and  grid-transfer operators,  are very crucial.    Here, we consider three block-structured relaxation schemes, Braess-Sarazin relaxation \cite{braess1997efficient},  Uzawa relaxation \cite{MR833993}, and distributive relaxation \cite{bacuta2011new}.  Braess-Sarazin relaxation  has been used for the Stokes equations \cite{braess1997efficient,he2018local, he2019local,voronin2021low},  poroelasticity equations \cite{adler2021monolithic}, magnetohydrodynamic equations \cite{adler2016monolithic}.  Uzawa-type relaxation has been applied to different problems, see  \cite{MR833993} for the Stokes equations, \cite{luo2017uzawa} for poroelasticity equations.  Analysis of inexact Uzawa for saddle-point problem  can be found in \cite{MR1451114,MR1302679,MR2001083}.   The papers \cite{bacuta2011new,oosterlee2006multigrid,MR3071182} investigate  distributive relaxation  for the Stokes equations,  and \cite{chen2015multigrid} is for Oseen problems.

%\cite{CH2021addVanka}

In \cite{he2018local}, we note that solving the Poisson equation arising as  a subproblem in the Stokes equation plays an important  role in determining the convergence speed in the multigrid methods.  Motivated by our recent work  \cite{CH2021addVanka},  where mass matrix $Q$ obtained from bilinear finite elements in two dimensions has been shown to be a   good approximation in some sense to the inverse of scalar Laplacian discretized by five-point finite difference method, here, we propose three novel block-structured relaxation schemes: mass-based  Braess-Sarazin relaxation ($Q$-BSR),  mass-based $\sigma$-Uzawa relaxation ($Q$-$\sigma$-Uzawa), and mass-based distributive relaxation ($Q$-DR), for solving the Stokes equations discretized by staggered finite-difference  scheme.  We present a theoretical analysis of the optimal smoothing factor of local Fourier analysis (LFA) \cite{MR1807961} for these relaxation schemes.  Our theoretical results show that  the optimal smoothing factors for the mass-based  distributive relaxation and mass-based Braess-Sarazin  relaxation are both  $\frac{1}{3}$, and it is   $\sqrt{\frac{1}{3}}$ for the mass-based $\sigma$-Uzawa relaxation.  To consider a potential parallel implementation for $Q$-BSR, we explore  inexact version of $Q$-BSR, called $Q$-IBSR,  where we apply one sweep of  weighted-Jacobi iteration to the Schur complement system. We find that $Q$-IBSR preserves the optimal smoothing factor $\frac{1}{3}$. Note that the mass matrix $Q$ is sparse and  computing the inverse of a matrix  is avoided in these relaxation schemes.  Our optimal smoothing factors  here are smaller than the corresponding ones ($\frac{3}{5}$ and $\sqrt{\frac{3}{5}}$) in \cite{he2018local}, where the weighted-Jacobi iteration is employed in solving the system with Laplacian coefficient matrix.  Regarding the computational cost of these mass-based relaxation schemes, we note that the cost of 2 sweeps of $Q$-$\sigma$-Uzawa is slightly more than one sweep of $Q$-IBSR, and the cost of one sweep of  $Q$-DR is a slightly more than the cost of one sweep of $Q$-IBSR. Consequently,  $Q$-IBSR is the most efficient.

An outline of the paper is as follows.   In Section \ref{sec:discretization}, we introduce the  staggered finite-difference discretization  for the Stokes equations and propose three mass-based block-structured multigrid relaxation schemes.  In Section \ref{sec:smoothing-analysis}, we present the analytical optimal smoothing factor of LFA for these three mass-based relaxation schemes.   Numerical results are presented in Section \ref{sec:Numer} to validate our theoretical results. Finally, we draw some conclusions in Section \ref{sec:concl}.
\section{Discretization and relaxation schemes}\label{sec:discretization}

%==================================================================================================
In this work, we  consider  the following Stokes equations in two dimensions
\begin{eqnarray}
  -\triangle\boldsymbol{u}+\nabla p&=&\boldsymbol{f},\label{Stokes1}\\
  \nabla\cdot \boldsymbol{u}&=&0,\label{Stokes2}
\end{eqnarray}
where $\boldsymbol{u}=\begin{pmatrix} u \\ v \end{pmatrix}$ is the velocity vector, and  $p$ is the scalar pressure of a viscous fluid.

We apply the standard staggered finite-difference discretization to \eqref{Stokes1} and \eqref{Stokes2}, known as the mark-and-cell (MAC) scheme, see \cite{MR1807961},  where the unknowns $u, v, p$, are placed in different locations, see  Figure~\ref{fig:MAC-stokes}. 
The velocity $u$ is located on the middle points of vertical edges ($\Box$), the velocity $v$ is located  on middle points of horizontal edges ($\lozenge$), and the pressure  $p$ is located in the center of each cell ($\bigstar$).  

 \begin{figure}[htp]
\centering
\includegraphics[width=0.5\textwidth]{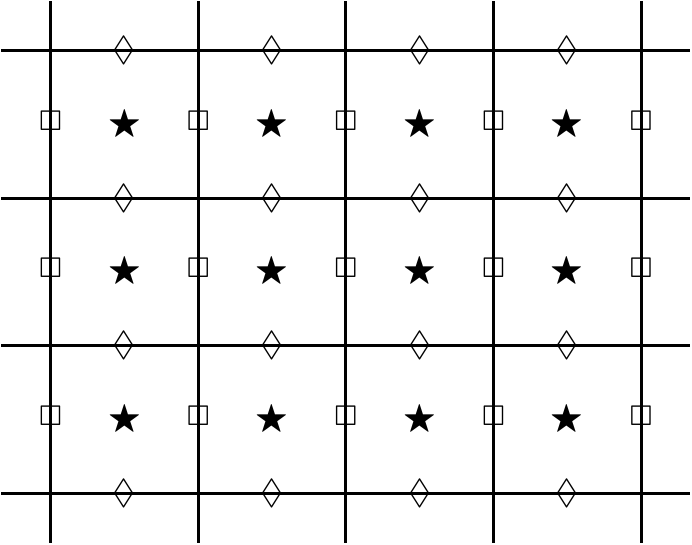}
 \caption{The staggered location of unknowns: $\Box-u,\,\, \lozenge-v,  \,\, \bigstar-p$}\label{fig:MAC-stokes}
\end{figure}
%===================================

In this paper, we deal with  uniform meshes with $h_x=h_y=h$.   The discrete momentum equations is given as  \cite{MR1807961}
\begin{align*} 
  -\triangle_{h}u_h+(\partial_{x})_{h/2}\,p_{h}&=f_{1,h}, \\
 -\triangle_{h}v_h+(\partial_{y})_{h/2}\,p_{h}  &=f_{2,h}.
\end{align*}
%where $\boldsymbol{f}_{h}=\begin{pmatrix} f_{1,h} \\f_{2,h} \end{pmatrix}$.
The Laplacian $-\triangle_{h}$ is discretized by the standard five-point scheme as follows
\begin{align*} 
  (\partial_{x})_{h/2}\,p_{h}(x,y) &= \frac{1}{h}\bigg(p_{h}\big(x+h/2,y\big)-p_{h}\big(x-h/2,y\big)\bigg), \\
  (\partial_{y})_{h/2}\,p_{h}(x,y) &= \frac{1}{h}\bigg(p_{h}\big(x,y+h/2\big)-p_{h}\big(x,y-h/2\big)\bigg).
\end{align*}
The discrete conservation of mass equation is given by
\begin{equation*}
 (\partial_{x})_{h/2}\,u_{h}(x,y)+(\partial_{y})_{h/2}\,v_{h}(x,y)=0.
\end{equation*}
The staggered discretization of the Stokes equations then  reads \cite{MR1807961}
 \begin{equation}\label{eq:Lh-operator}
   \mathcal{L}_h   =\begin{pmatrix}
      -\triangle_{h} & 0 &  (\partial_{x})_{h/2}\\
      0 & -\triangle_{h} & (\partial_{y})_{h/2} \\
      -(\partial_{x})_{h/2}  & -(\partial_{y})_{h/2} & 0
    \end{pmatrix},
  \end{equation}
where  the computational stencils are given by
\begin{equation*}
   -\triangle_{h} =\frac{1}{h^2}\begin{bmatrix}
       & -1 &  \\
      -1 & 4 & -1 \\
        & -1 &
    \end{bmatrix},\quad
     (\partial_x)_{h} =\frac{1}{h}\begin{bmatrix}
       -1& 0 & 1 \\
    \end{bmatrix},\quad
     (\partial_{y})_{h} =\frac{1}{h}\begin{bmatrix}
       1 \\
      0 \\
      -1
    \end{bmatrix}.
\end{equation*}

%=====================================================
\subsection{Relaxation schemes}
The resulting linear system of (\ref{Stokes1}) and (\ref{Stokes2})  is
\begin{equation}\label{saddle-structure}
       \mathcal{L}_{h}  x=\begin{pmatrix}
      A & B^{T}\\
     B & 0\\
    \end{pmatrix}
        \begin{pmatrix} \boldsymbol{u}_{h} \\ p_{h}\end{pmatrix}
  =\begin{pmatrix} \boldsymbol{f}_h \\ 0 \end{pmatrix}=b,
 \end{equation}
where $A$ is  the discretized vector Laplacian, $B$ is the negative of the discrete divergence operator, and $\boldsymbol{u}_{h}=\begin{pmatrix} u_{h} \\ v_{h} \end{pmatrix}$.

To solve \eqref{saddle-structure}, we consider the following relaxation scheme: given a current approximation $x_k$,  we updates this approximation via
\begin{equation}\label{eq:general relaxation-form}
\hat{x}_{k} = x_k-\omega M^{-1}(b-  \mathcal{L}_{h}  x_k)=x_k-\omega \delta x,
\end{equation}
where $M$ is an approximation to  $\mathcal{L}_{h} $, and $\delta x =M^{-1}(b-  \mathcal{L}_{h} x_k)$.   Then, the error-propagation for the relaxation scheme \eqref{eq:general relaxation-form} is 
\begin{equation}\label{eq:Sh-general-form}
  \mathcal{S}_h =I-\omega  { M}^{-1}  \mathcal{L}_h.
\end{equation}
The spectral radius of  $\mathcal{S}_h$ often determines the convergence speed in the multigrid methods.  In this work, we will  employ local Fourier analysis to    minimize the spectral radius of  $\mathcal{S}_h$, which will be presented in Section \ref{sec:smoothing-analysis}.

Note that \cite{he2018local}   discussed three block-structured relaxation schemes, that is, distributive weighted-Jacobi relaxation, Braess-Sarazin relaxation, and Uzawa-type
relaxation within multigrid methods  for solving \eqref{saddle-structure}.  We first give a brief introduction to these three relaxations, then propose our improved relaxation schemes.  For comparison purposes, we borrow some notations  from \cite{he2018local}.  For simplicity, throughout the rest of this paper,   we drop the subscript
$h$, except when necessary for clarity.
%====================================================================================================

{\bf Distributive relaxation}: We detail the idea of distributive relaxation \cite{MR558216} as follows. To relax the equation $\mathcal{L}x = b$, we  consider a transformed system $\mathcal{K}\hat{x}=\mathcal{L}P\hat{x}= b$, where $P\hat{x}=x$ and $P$ is chosen so that   $\mathcal{K}$
is suitable for decoupled relaxation.  For the Stokes equations,  $P $  is taken to be
 \begin{equation*}
  P =\begin{pmatrix}
      I_{h} & 0 &  (\partial_{x})_{h/2}\\
      0 & I_{h} & (\partial_{y})_{h/2} \\
      0  &0  & \triangle_{h}
    \end{pmatrix},
  \end{equation*}
whose  discrete matrix form  is
\begin{equation*}\label{P-Precondtion}
   P=  \begin{pmatrix}
      I & B^{T}\\
      0 & -A_p\\
    \end{pmatrix},
\end{equation*}
where $A_p$ is the  the standard 5-point stencil of the Laplacian operator discretized at cell centers.  Accordingly,  the distributed operator has the form
   \begin{equation}\label{DWJ-system}
      \mathcal{K}=\mathcal{L}P=\begin{pmatrix}
      -\triangle_{h} & 0 &  0\\
      0 & -\triangle_{h} & 0 \\
      -(\partial_{x})_{h/2}  &-(\partial_{y})_{h/2}  & -\triangle_{h}
    \end{pmatrix}.
  \end{equation}
 %============================================================================
Then, we can apply  block   relaxation to the transformed system.  For instance, distributive Gauss-Seidel relaxation \cite{MR558216,oosterlee2006multigrid} (DGS) is well-known to be highly efficient for the MAC discretization.   Distributive weighted-Jacobi (DWJ) relaxation has been discussed in \cite{he2018local},  where one needs to  solve the following system
\begin{equation}\label{DWJ-Precondtion}
   M_D\delta \hat{x}=  \begin{pmatrix}
      \alpha_D C & 0\\
      B & \alpha_D E \\
    \end{pmatrix}
    \begin{pmatrix} \delta \mathcal{\hat{U}} \\ \delta \hat{p}\end{pmatrix}
  =\begin{pmatrix} r_{\mathcal{U}} \\ r_{p}\end{pmatrix},
\end{equation}
where  $C= {\rm diag}(A)$, and $E={\rm diag}(A_p)$. Furthermore,
\begin{eqnarray*}
  \delta \mathcal{\hat{U}} &=&(\alpha_D  C)^{-1}r_{\mathcal{U}}, \\
 \delta \hat{p} &=& \big(\alpha_D  E \big)^{-1}(r_{p}-B\delta \mathcal{\hat{U}}).
\end{eqnarray*}
 Using $\delta x=\mathcal{P}\delta \hat{x}$ gives 
\begin{eqnarray*}
  \delta \mathcal{U} &=&\delta \mathcal{\hat{U}}+B^{T}\delta \hat{p},\\
 \delta p &=& -A_{p} \delta \hat{p}.
 \end{eqnarray*}
 The error propagation operator for the distributive relaxation scheme is  $I-\omega_DPM^{-1}_D\mathcal{L}$.

%====================================================================================================
 {\bf  Braess-Sarazin relaxation}: In Braess-Sarazin relaxation, $M$ is given by
\begin{equation}\label{Precondtion}
   M_B=  \begin{pmatrix}
      \alpha C & B^{T}\\
     B & 0\\
    \end{pmatrix},
\end{equation}
where $C$ is an approximation of $A$, whose inverse is easy to apply, and $\alpha>0$ is a chosen relaxation parameter. 

Using $M=M_B$ in \eqref{eq:general relaxation-form}, one needs to compute the update $\delta x=(\delta \mathcal{U}, \delta p)$ in \eqref{eq:general relaxation-form} by  the following two stages
\begin{eqnarray}
  (BC^{-1}B^{T})\delta p&=&BC^{-1}r_{\mathcal{U}}-\alpha r_{p}, \label{solution-of-precondtion}\\
  \delta \mathcal{U}&=&\frac{1}{\alpha}C^{-1}(r_{\mathcal{U}}-B^{T}\delta p),\nonumber
\end{eqnarray}
where $(r_{\mathcal{U}},r_{p})=b-\mathcal{L}x_k$.

%========================================================================================================
In \cite{he2018local}, we considered  $C={\rm diag}(A)$.  In practice,   an approximate solve  for \eqref{solution-of-precondtion} is sufficient \cite{MR1810326}, such as using a simple sweep of a Gauss-Seidel \cite{MR1049395} or weighted Jacobi iteration \cite{he2018local}.

%========================================================================================================
{\bf $\sigma$-Uzawa relaxation} \cite{he2018local}:  Uzawa-type relaxation is  a popular family of algorithms for solving saddle-point systems \cite{MR1302679,MR833993}, which can be treated as a lower triangular approximation, where $M$ is taken to be
\begin{equation}\label{Uzawa-Precondtion}
   M_{U} =  \begin{pmatrix}
      \alpha C & 0\\
     B & -D\\
    \end{pmatrix},
\end{equation}
where $\alpha C$ is an approximation of $A$, for example, $C={\rm diag}(A)$ \cite{he2018local},  and $D$ is an approximation to the Schur complement $BC^{-1}B^{T}$.  One possible choice of  $D$ is  $D =\sigma^{-1}I$, see \cite{he2018local}. The resulting relaxation is referred to $\sigma$-Uzawa relaxation.

In $\sigma$-Uzawa relaxation,  $\delta x$ in \eqref{eq:general relaxation-form}  with $M=M_U$ defined in (\ref{Uzawa-Precondtion}) is equivalent to computing updates as
\begin{eqnarray*}
  \delta \mathcal{U} &=&(\alpha C)^{-1}r_{\mathcal{U}}, \\
 \delta p &=&D^{-1}(B\delta\mathcal{U}-r_{p}).
\end{eqnarray*}
%

%======================================================================================

Note that, in all these relaxation schemes, the key point is to find a good approximation, $C$, to $A$, and we particularly need   $C^{-1}$. This inspires us to consider an approximation of $C^{-1}$ directly. In our recently work \cite{CH2021addVanka}, we have shown that the mass matrix $Q$ derived from bilinear finite elements is a very good approximation to the inverse of scalar Laplacian discretized by finite difference method. Therefore,  we will consider 
\begin{equation}\label{eq:C-inverse-Q}
C^{-1}  = 
\begin{pmatrix}
Q &  0\\
0 & Q
\end{pmatrix}
\end{equation}
for the above three relaxation schemes. We call the resulting schemes  {\em mass-based relaxations}, for example, mass-based Braess-Sarazin relaxation. 
%
%

%=====================================================================================================

\section{Smoothing analysis}\label{sec:smoothing-analysis} 
LFA \cite{MR1807961} is a very useful tool for analyzing the convergence speed of multigrid methods, by examining the spectral radius of {\it symbol} of the underlying operator.  In general, the smoothing factor, $\mu$, of LFA  offers  sharp predictions of actual multigrid convergence. In this work, we employ LFA to examine the block-structured  multigrid relaxation schemes proposed in Section \ref{sec:discretization} by exploring the LFA smoothing factor.  We consider geometric multigrid methods for staggered discretizations with standard coarsening, that is, we construct a sequence of coarse grids by doubling the mesh size in each spatial direction.  High and low frequencies for standard coarsening are given by
\begin{equation*}
  \boldsymbol{\theta}\in T^{{\rm low}} =\left[-\frac{\pi}{2},\frac{\pi}{2}\right)^{2}, \, \boldsymbol{\theta}\in T^{{\rm high}} =\displaystyle \left[-\frac{\pi}{2},\frac{3\pi}{2}\right)^{2} \bigg\backslash \left[-\frac{\pi}{2},\frac{\pi}{2}\right)^{2}.
\end{equation*}
In the following, $\widetilde{L}$ denotes the symbol of operator $L$.  We omit the details on how to compute the symbols. For more details, see \cite{MR1807961} and \cite{he2018local}.
\begin{definition} The smoothing factor for  the error-propagation for the relaxation scheme $\mathcal{S}$, see \eqref{eq:Sh-general-form}, is defined as
\begin{equation*}
  \mu_{{\rm loc}}(\boldsymbol{p})=\max_{\boldsymbol{\theta}\in T^{{
  \rm high}}}\left\{\left|\lambda(\widetilde{\mathcal{S}}(\boldsymbol{\theta}))\right| \,\,\right\},
\end{equation*}
where  $\boldsymbol{p}$ is algorithmic parameters,  and $\lambda\big(\widetilde{\mathcal{S}}(\boldsymbol{\theta})\big)$ denotes the  eigenvalue of symbol $\widetilde{\mathcal{S}}(\boldsymbol{\theta})$. 
\end{definition}
Often, one can minimize $\mu_{{\rm loc}}(\boldsymbol{p})$ with respect to $\boldsymbol{p}$ to obtain fast convergence speed.  Thus, we define the optimal smoothing factor as follows.
\begin{definition}\label{def-opt}
We define the optimal smoothing factor  for  the error-propagation for the relaxation scheme $\mathcal{S}$  as
  \begin{equation*}
    \mu_{{\rm opt}}=\min_{\boldsymbol{p} \in \Theta}{\mu_{{\rm loc}}}(\boldsymbol{p}),
  \end{equation*}
  where $\Theta$ is the set of allowable parameters. 
\end{definition}
We note that $\boldsymbol{p} $ may contains many components depending on the selection of the relaxation scheme. In our setting, $\boldsymbol{p} $ has dimension one, two or three.

%==============================================================================================

%
The symbol of operator $\mathcal{L} $, see \eqref{eq:Lh-operator}, is given by \cite{he2018local}
\begin{equation*}\label{Symbol-exact}
  \widetilde{\mathcal{L}}(\theta_1,\theta_2) =\frac{1}{h^2}\begin{pmatrix}
       4m(\boldsymbol{\theta})& 0 & i 2h \sin\frac{\theta_1}{2}  \\
      0 &  4m(\boldsymbol{\theta})& i 2h \sin\frac{\theta_2}{2} \\
      -i 2h \sin\frac{\theta_1}{2}   &  -i 2h \sin\frac{\theta_2}{2} &0
    \end{pmatrix},
\end{equation*}
where  $i^2=-1$ and 
\begin{equation}\label{eq:scaled-symbol-Ah}
m(\boldsymbol{\theta})=\frac{4-2\cos\theta_{1}-2\cos\theta_2}{4}=\sin^{2}(\theta_1/2)+\sin^{2}(\theta_2/2).
\end{equation}
Let $A_s$ be the scalar Laplacian operator. It is easily seen that  the symbol of the scalar Laplacian is
\begin{equation}\label{eq:scalar-symbol-Laplace}
\widetilde{A}_s = \frac{4m(\boldsymbol{\theta})}{h^2}.
\end{equation}
 The error-propagation symbol for a relaxation scheme, $\mathcal{S}$, applied to MAC scheme is
\begin{equation*}
\widetilde{ \mathcal{S}}(\boldsymbol{p},\boldsymbol{\theta})=I-\omega \widetilde{ M}^{-1}\widetilde{\mathcal{L}},
\end{equation*}
where $\boldsymbol{p}$ represents parameters within $M$ and  $\omega$.

Before we analyze the smoothing factor with $C^{-1}$  being the vector mass matrix defined in \eqref{eq:C-inverse-Q}, we   present some properties for the mass operator $Q$.  Recall that the mass matrix stencil using bilinear finite elements in two dimensions is 
\begin{equation}\label{eq:Ms2d}
Q= \frac{h^2}{36}
\begin{bmatrix}
 1& 4 & 1\\
4 & 16 & 4 \\
1 & 4 &1
\end{bmatrix}.
\end{equation} 
%===================================================
It   easily follows that the symbol of $Q$ is  $\widetilde{Q} =\frac{h^2}{9}(2+\cos\theta_1)(2+\cos \theta_2)$. 
 Let us denote 
\begin{equation} \label{eq:scaled-symbol-Q-inverse}
m_s(\boldsymbol{\theta}) =\left(\frac{1}{9}(2+\cos\theta_1)(2+\cos \theta_2)\right)^{-1}, 
\end{equation}
which will be   useful  for our discussion later.  Using  \eqref{eq:scalar-symbol-Laplace} and \eqref{eq:scaled-symbol-Q-inverse}, we have
\begin{equation}\label{eq:ratio-mr}  
\widetilde{ Q} \widetilde{A}_s = \frac{4m(\boldsymbol{\theta})}{m_s(\boldsymbol{\theta}) }=: m_r(\boldsymbol{\theta}).
\end{equation}
%====================================================
\begin{lemma} \cite{CH2021addVanka}\label{lem:range-mr}
For $\boldsymbol{\theta} \in T^{high}$, $m_r(\boldsymbol{\theta}) \in [8/9, 16/9]=:[\eta_1,\eta_2]$.
\end{lemma}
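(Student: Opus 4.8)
The plan is to reduce the statement to an elementary two-variable optimization in $c_1:=\cos\theta_1$ and $c_2:=\cos\theta_2$. First I would rewrite $m_r$ purely in terms of $c_1,c_2$. From \eqref{eq:scaled-symbol-Ah} one has $m(\boldsymbol{\theta})=\sin^2(\theta_1/2)+\sin^2(\theta_2/2)=1-\tfrac{1}{2}(c_1+c_2)$, while \eqref{eq:scaled-symbol-Q-inverse} gives $1/m_s(\boldsymbol{\theta})=\tfrac{1}{9}(2+c_1)(2+c_2)$. Substituting both into \eqref{eq:ratio-mr} yields
\[
m_r(\boldsymbol{\theta})=\frac{4}{9}\,g(c_1,c_2),\qquad g(c_1,c_2):=\Big(1-\tfrac{c_1+c_2}{2}\Big)(2+c_1)(2+c_2),
\]
so the lemma is equivalent to proving $g(c_1,c_2)\in[2,4]$ on the relevant set. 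Since $\tfrac{4}{9}\cdot 2=\tfrac{8}{9}=\eta_1$ and $\tfrac{4}{9}\cdot 4=\tfrac{16}{9}=\eta_2$, this is exactly the asserted range.

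Next I would pin down the admissible region for $(c_1,c_2)$. A frequency $\theta_j$ is \emph{low} when $\theta_j\in[-\pi/2,\pi/2)$, where $\cos\theta_j\in[0,1]$, and \emph{high} when $\theta_j\in[\pi/2,3\pi/2)$, where $\cos\theta_j\in[-1,0]$. Because $\boldsymbol{\theta}\in T^{high}$ requires at least one component to be high, the pair $(c_1,c_2)$ ranges over the L-shaped region $R:=\{(c_1,c_2)\in[-1,1]^2:\ \min(c_1,c_2)\le 0\}$. I would then optimize $g$ over $R$. For the upper bound, solving $\nabla g=0$ gives $c_1=-c_2/2$ and $c_2=-c_1/2$, whose only solution is the interior point $(0,0)$, where $g=4$; a quick Hessian check shows it is a strict local maximum, and evaluating $g$ on the four edges of the square confirms $g<4$ there, so $g\le 4$ on $R$ and hence $m_r\le\eta_2$. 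For the lower bound I would restrict to $\partial R$: on the two constraint segments $c_j=0$ one computes $g=4-c_i^2\ge 3$, while on the square edges lying in $R$ the one-variable minimizations (e.g. $g(-1,c_2)=\tfrac12(3-c_2)(2+c_2)$) attain their minimum value $2$ at $(c_1,c_2)=(-1,-1)$, i.e. $\theta_1=\theta_2=\pi$. Since $g$ has no interior minimum, this gives $g\ge 2$ on $R$ and hence $m_r\ge\eta_1$.

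The main subtlety to watch is that $R$ is non-convex and, crucially, that the \emph{unconstrained} minimum of $g$ over the full square is $g(1,1)=0$, attained at the corner $(1,1)$ corresponding to the lowest frequency $(\theta_1,\theta_2)=(0,0)\notin T^{high}$. Thus the high-frequency restriction is genuinely needed for the lower bound: if one forgot to exclude the low-frequency corner, the claimed interval would fail. Consequently the real work lies not in any single calculation but in the careful boundary analysis of the L-shaped domain, together with verifying that the single interior critical point is a maximum rather than a minimum, so that the extreme values can only be produced on $\partial R$.
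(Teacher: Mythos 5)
Your proof is correct, but there is nothing in the paper to compare it against: the paper states Lemma~\ref{lem:range-mr} as an imported result, citing \cite{CH2021addVanka} without reproducing any argument, so your elementary optimization is a genuinely self-contained verification rather than a variant of an in-paper proof. Checking the details: the reduction via $\sin^2(\theta_j/2)=\tfrac{1-\cos\theta_j}{2}$ to $m_r=\tfrac{4}{9}g(c_1,c_2)$ with $g(c_1,c_2)=\bigl(1-\tfrac{c_1+c_2}{2}\bigr)(2+c_1)(2+c_2)$ is exact; the image of $T^{\rm high}$ under $(\cos\theta_1,\cos\theta_2)$ is indeed the L-shaped set $\{\min(c_1,c_2)\le 0\}$, since a high component forces $\cos\theta_j\in[-1,0]$ and conversely any such pair is realized. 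Your stationary-point analysis holds up: $\partial g/\partial c_1=-(2+c_2)\bigl(c_1+\tfrac{c_2}{2}\bigr)$ and symmetrically in $c_2$, so $(0,0)$ is the unique critical point in the open square, the Hessian there is $\left(\begin{smallmatrix}-2&-1\\ -1&-2\end{smallmatrix}\right)$ (negative definite), $g(0,0)=4$, and the edge maxima $\tfrac{27}{8}$ (on $c_j=1$) and $\tfrac{25}{8}$ (on $c_j=-1$) are both below $4$; on the inner segments $c_j=0$ one gets $g=4-c_i^2\ge 3$, and the boundary minimum $g(-1,-1)=2$ yields $\eta_1=\tfrac{8}{9}$ at $\theta_1=\theta_2=\pi$. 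One small point worth making explicit: the critical point $(0,0)$ lies on $\partial R$ (the inner corner of the L), not in the interior of $R$, but this does not affect your logic — for the upper bound you work over the whole square, and for the lower bound the absence of any critical point in the interior of $R$ still forces the minimum onto $\partial R$. Your observation that both extremes are attained (at $\boldsymbol{\theta}=(\pi/2,\pi/2)$ and $(\pi,\pi)$) is also consistent with how the paper later uses $\eta_1,\eta_2$ as achieved values, e.g.\ in the equalities of Theorems~\ref{thm:complex-max-value} and~\ref{inside-optimal}, and your remark that the excluded low-frequency corner $(c_1,c_2)=(1,1)$ gives $g=0$ correctly identifies why the restriction to $T^{\rm high}$ is indispensable.
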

%======================================================

We restate the optimal smoothing factor for  solving the Poisson problem using the mass-based relaxation \cite{CH2021addVanka}. 
%============================================================================
\begin{lemma}\label{lemma:mass-smoothing-factor}
If we consider the mass stencil $Q$ to approximate the inverse of the scalar  Laplacian, $A_s$, and  $S_{s}=I-\omega QA_s$, then  the optimal smoothing factor for $S_{s}$  is
\begin{equation}\label{eq:smoothing-mass-Laplace}
  \mu_{s,\rm opt}= \min_{\omega} \max_{\boldsymbol{\theta}\in T^{{
  \rm high}}} {|1- \omega \widetilde{ Q}  \widetilde{A}_s|}=\frac{1}{3},
\end{equation}
where the minimum is achieved if and only if $\omega=\omega_{\rm opt} = \frac{3}{4}$. 
\end{lemma}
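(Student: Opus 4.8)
The plan is to exploit the fact that the symbol $\widetilde{\mathcal{S}}_s(\boldsymbol{\theta}) = 1 - \omega\widetilde{Q}\widetilde{A}_s = 1 - \omega\, m_r(\boldsymbol{\theta})$ is a scalar quantity, so that its (single) eigenvalue is the symbol itself and the smoothing factor is simply $\max_{\boldsymbol{\theta}\in T^{\mathrm{high}}} |1 - \omega\, m_r(\boldsymbol{\theta})|$. By Lemma \ref{lem:range-mr}, as $\boldsymbol{\theta}$ ranges over $T^{\mathrm{high}}$ the scalar $m_r(\boldsymbol{\theta})$ ranges over the interval $[\eta_1,\eta_2]=[8/9,16/9]$, with both endpoints attained. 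Hence the inner maximization over $\boldsymbol{\theta}$ collapses to a maximization of $|1-\omega t|$ over $t\in[\eta_1,\eta_2]$, and the whole problem becomes the scalar min-max $\min_{\omega}\max_{t\in[\eta_1,\eta_2]}|1-\omega t|$.

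First I would fix $\omega$ and evaluate the inner maximum. Since $t\mapsto 1-\omega t$ is affine, $|1-\omega t|$ is a convex function of $t$ and therefore attains its maximum over the closed interval $[\eta_1,\eta_2]$ at one of the two endpoints; that is, for each $\omega$ the smoothing factor equals $\Phi(\omega):=\max\{|1-\omega\eta_1|,\,|1-\omega\eta_2|\}$. Because $\eta_1,\eta_2>0$, any $\omega\le 0$ gives $\Phi(\omega)\ge 1$, so it suffices to minimize $\Phi$ over $\omega>0$.

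Next I would carry out the standard balancing (equioscillation) argument. For $0<\omega\le 1/\eta_2$ both endpoint terms are nonnegative and $\Phi(\omega)=1-\omega\eta_1$ is strictly decreasing; for $\omega\ge 1/\eta_1$ one has $\Phi(\omega)=\omega\eta_2-1$ strictly increasing; in the intermediate regime $1/\eta_2\le\omega\le 1/\eta_1$ we have $\Phi(\omega)=\max\{1-\omega\eta_1,\,\omega\eta_2-1\}$, the maximum of a strictly decreasing and a strictly increasing affine function, which is minimized exactly where the two cross. Setting $1-\omega\eta_1=\omega\eta_2-1$ yields the unique minimizer $\omega_{\mathrm{opt}}=2/(\eta_1+\eta_2)=3/4$ and the common value $(\eta_2-\eta_1)/(\eta_2+\eta_1)=1/3$. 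Substituting $\eta_1=8/9$, $\eta_2=16/9$ then confirms $\mu_{s,\mathrm{opt}}=1/3$, attained if and only if $\omega=3/4$.

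The computation is almost entirely routine once Lemma \ref{lem:range-mr} is available; the only point requiring genuine care is the reduction in the first step, namely confirming that the extreme values $\eta_1$ and $\eta_2$ of $m_r$ are actually attained on $T^{\mathrm{high}}$, so that the scalar max over the interval coincides with the max over $\boldsymbol{\theta}$ and furnishes a matching lower bound rather than merely the upper bound $\mu_{s,\mathrm{opt}}\le 1/3$. The monotonicity argument showing that the balance point is the unique minimizer of $\Phi$ is the remaining detail to make rigorous, but it is elementary since $\Phi$ is the pointwise maximum of one strictly decreasing and one strictly increasing affine function of $\omega$.
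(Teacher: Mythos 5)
Your proof is correct, but note that the paper itself contains no proof of this lemma: it is restated verbatim from the cited reference \cite{CH2021addVanka}, so there is no in-paper argument to compare against. Your equioscillation argument is the standard and presumably intended one, and it is consistent with how the paper uses the result downstream (in the Uzawa analysis the same balancing $\left|1-\tfrac{\omega_U}{\alpha_U}\eta_1\right|=\left|1-\tfrac{\omega_U}{\alpha_U}\eta_2\right|=\tfrac13$ at $\tfrac{\omega_U}{\alpha_U}=\tfrac34$ is invoked). The one subtlety you flagged is genuine as stated: Lemma \ref{lem:range-mr} asserts only the containment $m_r(\boldsymbol{\theta})\in[8/9,16/9]$, not that the endpoints are attained, and without attainment your reduction would yield only the upper bound $\mu_{s,\rm opt}\le \tfrac13$ and would not force uniqueness of $\omega_{\rm opt}$. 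This closes immediately by exhibiting frequencies: with $x=\sin^2(\theta_1/2)$, $y=\sin^2(\theta_2/2)$ one has $m_r=\tfrac{4}{9}(x+y)(3-2x)(3-2y)$, so $m_r(\pi,\pi)=\tfrac{8}{9}$ and $m_r(\pi/2,\pi/2)=\tfrac{16}{9}$, and both $(\pi,\pi)$ and $(\pi/2,\pi/2)$ lie in $T^{\rm high}$. With that observation inserted, your scalar min-max computation $\omega_{\rm opt}=2/(\eta_1+\eta_2)=\tfrac34$ and $\mu_{s,\rm opt}=(\eta_2-\eta_1)/(\eta_2+\eta_1)=\tfrac13$, together with the monotonicity argument for uniqueness, is a complete and self-contained proof of the lemma.
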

%===================================================
Lemma \ref{lemma:mass-smoothing-factor}  plays an important role in analyzing  the   optimal smoothing factor for the mass-based relaxation schemes for the Stokes equations because block relaxation schemes  include  $S_{s}$  as a piece of the overall relaxation.  Therefore, the  value $\frac{1}{3}$  offers a lower bound on the optimal smoothing factor for the  mass-based relaxation schemes. To  distinguish  the mass-based Braess-Sarazin relaxation,  distributive relaxation, and Uzawa relaxation, subscripts, such as $B, D$ and $U$,  will be used  in $S, \omega$ and $\mu_{\rm opt}$  throughout the rest of this paper.
\subsection{Mass-based Distributive relaxation}
  The symbol of operator $\mathcal{K}=\mathcal{L}P$, see \eqref{DWJ-system},  is given by
\begin{equation*}\label{Symbol-exact}
  \widetilde{\mathcal{K}}(\theta_1,\theta_2) =\frac{1}{h^2}\begin{pmatrix}
       4m(\boldsymbol{\theta})& 0 & 0  \\
      0 &  4m(\boldsymbol{\theta})& 0 \\
      -i 2h \sin\frac{\theta_1}{2}   &  -i 2h \sin\frac{\theta_2}{2} & 4m(\boldsymbol{\theta})
    \end{pmatrix}.
\end{equation*}
The symbol of the block relaxation operator \eqref{DWJ-Precondtion} with $C^{-1}$ given by the mass approximation \eqref{eq:C-inverse-Q} is
  \begin{equation*}\label{Symbol-PrecondDWJ}
   \widetilde{M}_{D}(\theta_1,\theta_2) =\frac{1}{h^2}\begin{pmatrix}
       m_s  & 0 & 0  \\
      0 &   m_s & 0 \\
      -i 2h \sin\frac{\theta_1}{2}   & -i 2h \sin\frac{\theta_2}{2} &m_s
    \end{pmatrix}.
\end{equation*}
It can be easily  shown  that all of the eigenvalues of the error-propagation symbol, $\mathcal{\widetilde{S}}_{D}(\alpha_{D}, \omega_{D},\boldsymbol{\theta})=I- \omega_{D}\widetilde{P} \widetilde {M}_{D}^{-1}\widetilde{\mathcal{L}}$, are
$1-\omega_{D}\frac{4m(\boldsymbol{\theta})}{m_s(\boldsymbol{\theta})}.$
%=====================================================================
\begin{theorem}\label{QDRsmoothing-theorem}
The optimal smoothing factor for $Q$-DR is
  \begin{equation*}\label{EX-smoothing}
   \mu_{{\rm opt},D}=\min_{ \omega_{D}}\max_{ \boldsymbol{\theta}\in T^{{\rm high}}}{\big|\lambda(\mathcal{\widetilde{S}}_{D}(\alpha_{D}, \omega_{D},\boldsymbol{\theta}))\big|}=\frac{1}{3},
\end{equation*}
where the minimum is uniquely achieved at  $\omega_{D}=\frac{3}{4}$.
\end{theorem}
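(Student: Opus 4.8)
The plan is to reduce the Stokes smoothing problem entirely to the scalar Poisson smoothing problem already solved in Lemma~\ref{lemma:mass-smoothing-factor}. The structural feature that makes this work is that the distributive transformation $\mathcal{K}=\mathcal{L}P$ yields a block lower-triangular symbol whose two velocity diagonal blocks and its pressure diagonal block all equal $4m(\boldsymbol{\theta})/h^2$, while $\widetilde{M}_D$ inherits the same triangular pattern with constant diagonal $m_s/h^2$. Because both $\widetilde{P}\widetilde{M}_D^{-1}\widetilde{\mathcal{L}}$ and the identity are (block) triangular with equal diagonal entries, the eigenvalues of $\widetilde{\mathcal{S}}_D = I-\omega_D\widetilde{P}\widetilde{M}_D^{-1}\widetilde{\mathcal{L}}$ are read off its diagonal and collapse to the single scalar value $1-\omega_D\,4m(\boldsymbol{\theta})/m_s(\boldsymbol{\theta})$, independent of $\alpha_D$ --- exactly as asserted in the line preceding the theorem.

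With this in hand, I would rewrite the eigenvalue using \eqref{eq:ratio-mr}, namely $4m(\boldsymbol{\theta})/m_s(\boldsymbol{\theta}) = \widetilde{Q}\widetilde{A}_s = m_r(\boldsymbol{\theta})$, so that $|\lambda(\widetilde{\mathcal{S}}_D)| = |1-\omega_D m_r(\boldsymbol{\theta})|$, which is verbatim the objective in \eqref{eq:smoothing-mass-Laplace}. The minimax over $(\alpha_D,\omega_D)$ thus degenerates to a one-parameter problem in $\omega_D$, and one route is simply to invoke Lemma~\ref{lemma:mass-smoothing-factor}. A self-contained derivation uses Lemma~\ref{lem:range-mr}: for fixed $\omega_D>0$ the convex map $t\mapsto|1-\omega_D t|$ attains its maximum over $m_r\in[\eta_1,\eta_2]=[8/9,16/9]$ at an endpoint, so $\max_{\boldsymbol{\theta}}|1-\omega_D m_r| = \max\{|1-\omega_D\eta_1|,|1-\omega_D\eta_2|\}$; this outer function of $\omega_D$ is a convex V-shape minimized at the equioscillation point $1-\omega_D\eta_1 = \omega_D\eta_2-1$, giving $\omega_D = 2/(\eta_1+\eta_2) = \frac{3}{4}$ and common value $\frac{1}{3}$.

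I expect no genuine obstacle, since the heavy lifting sits in the eigenvalue computation (already stated) and the two input lemmas. Two small points deserve care. First, one should note that $\alpha_D$ truly drops out of the spectrum: it scales only the off-diagonal coupling of the triangular symbol and hence cannot affect the diagonal eigenvalues, which is precisely what collapses the optimization to the scalar case. Second, for the uniqueness claim one checks that $\omega_D\le 0$ forces the factor to be at least $1$ (so it is never optimal) and that the V-shape has strictly negative slope for $\omega_D<\frac{3}{4}$ and strictly positive slope for $\omega_D>\frac{3}{4}$, so the minimizer $\omega_D=\frac{3}{4}$ is unique.
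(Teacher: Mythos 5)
Your proposal is correct and takes essentially the same route as the paper: identify all eigenvalues of the error-propagation symbol as $1-\omega_{D}\,4m(\boldsymbol{\theta})/m_s(\boldsymbol{\theta})=1-\omega_{D}\,m_r(\boldsymbol{\theta})$ and invoke Lemma~\ref{lemma:mass-smoothing-factor} (your self-contained equioscillation argument via Lemma~\ref{lem:range-mr}, giving $\omega_D=2/(\eta_1+\eta_2)=\frac{3}{4}$ and value $\frac{1}{3}$, merely unpacks what that lemma already provides, and it also supplies the uniqueness claim the paper leaves implicit). Two cosmetic slips worth fixing: $\widetilde{P}\widetilde{M}_{D}^{-1}\widetilde{\mathcal{L}}$ is not itself triangular --- it is similar, by conjugation with $\widetilde{P}$, to the lower-triangular matrix $\widetilde{M}_{D}^{-1}\widetilde{\mathcal{L}}\widetilde{P}=\widetilde{M}_{D}^{-1}\widetilde{\mathcal{K}}$, whence the eigenvalues are read off --- and in \eqref{DWJ-Precondtion} the parameter $\alpha_{D}$ multiplies the \emph{diagonal} blocks $C$ and $E$, so it drops out not because it sits off-diagonal but because it only rescales the effective damping $\omega_{D}/\alpha_{D}$, leaving the one-parameter minimax unchanged.
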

%=======================================================================
 \begin{proof}
Note that $\frac{4m }{m_s }=\widetilde{Q}\widetilde{A}_s$.  As shown in Lemma \ref{lemma:mass-smoothing-factor}, we know that $\mu_{\rm opt}(I- \omega_{D}\widetilde{Q}\widetilde{A}_s)=\frac{1}{3}$  with $\omega_D=\frac{3}{4}$.  Thus, 
  $ \displaystyle \min_{ \omega_{D}} \max_{\boldsymbol{\theta}\in T^{{\rm high}}}\big|\lambda(\mathcal{\widetilde{S}}_{D}(\omega_{D},\boldsymbol{\theta}))\big|=\frac{1}{3}.$
 \end{proof}
Compared to Lemma \ref{lemma:mass-smoothing-factor}, Theorem \ref{QDRsmoothing-theorem}  shows  that $\frac{1}{3}$  is preserved for the block relaxation scheme.  A similar property is also  observed in \cite{he2018local} for the distributive weighted Jacobi relaxation for the Stokes equations.
 
 \begin{remark}
We point out some existing results for  variants of distributive relaxation  for MAC discretization of the Stokes equations.  For example,  LFA predicts a two-grid convergence factor for DGS which requires a single sweep of GS on the velocity equations plus one on the pressure unknowns, of 0.4 using 6-point interpolation and 12-point restriction in \cite{MR1049395}.  In \cite{he2018local},   LFA predicts a two-grid convergence factor for DWJ of 0.6  for both  linear and bilinear interpolation.  
\end{remark}
%=======================================================================================

\subsection{Mass-based Braess-Sarazin-type relaxation schemes }
 
Next, we examine  Braess-Sarazin-type algorithms, which  were originally developed as a relaxation scheme for the Stokes equations \cite{braess1997efficient}.   In practice, (\ref{solution-of-precondtion}) is not solved exactly, see \cite{he2018local}. Thus, in the following, we first consider  exact solve for (\ref{solution-of-precondtion}), then inexact one.
%
%
%==================================================================
 
 {\bf Exact Braess-Sarazin relaxation}: In \eqref{Precondtion}, for simplicity, we consider  $\alpha=1$ and $C^{-1}$ being the vector mass matrix, see \eqref{eq:C-inverse-Q}. Later,
  we will see  that the choice of $\alpha>0$ does not affect the optimal smoothing factor.  We first derive the corresponding optimal smoothing factor for the exact mass-based Braess-Sarazin relaxation.   We  refer to the relaxation scheme as  $Q$-BSR. 

%====================================================================
\begin{theorem}\label{EX-smoothing-theorem}
The optimal smoothing factor for  $Q$-BSR  is
  \begin{equation*}\label{EX-smoothing}
   \mu_{{\rm opt},B}=\displaystyle \min_{\omega_{B}}\max_{ \boldsymbol{\theta}\in T^{{\rm high}}}{\big|\lambda(\mathcal{\widetilde{S}}_{B}( \omega_{B},\boldsymbol{\theta}))\big|}=\frac{1}{3},
\end{equation*}
where the minimum is uniquely achieved at $\omega_{B}=\frac{3}{4}$.
\end{theorem}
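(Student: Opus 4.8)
The plan is to compute the eigenvalues of the error-propagation symbol $\widetilde{\mathcal{S}}_B(\omega_B,\boldsymbol{\theta}) = I - \omega_B \widetilde{M}_B^{-1}\widetilde{\mathcal{L}}$ explicitly, reduce the optimization to a scalar problem in $\omega_B$, and then invoke Lemma \ref{lemma:mass-smoothing-factor} to pin down both the value $\frac{1}{3}$ and the optimal parameter $\omega_B = \frac{3}{4}$. First I would write out the symbol $\widetilde{M}_B$ of the Braess-Sarazin preconditioner \eqref{Precondtion} with $\alpha = 1$ and $C^{-1}$ given by the vector mass matrix \eqref{eq:C-inverse-Q}. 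Since the velocity block of $\widetilde{M}_B$ is the diagonal matrix with entries $m_s(\boldsymbol{\theta})/h^2$ (the inverse of the mass symbol, scaled) and the coupling blocks $\widetilde{B}^T$, $\widetilde{B}$ match those of $\widetilde{\mathcal{L}}$ exactly, the key point is that because the off-diagonal blocks agree with $\mathcal{L}$, the product $\widetilde{M}_B^{-1}\widetilde{\mathcal{L}}$ should simplify considerably.

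The main computational step is to evaluate $\widetilde{M}_B^{-1}\widetilde{\mathcal{L}}$ using the block (Schur complement) structure of the saddle-point symbol. I would observe that since $\widetilde{M}_B$ and $\widetilde{\mathcal{L}}$ share the identical $\widetilde{B}, \widetilde{B}^T$ blocks and differ only in the velocity block ($m_s/h^2$ versus $4m/h^2$), the generalized eigenvalue problem $\widetilde{\mathcal{L}}\phi = \lambda \widetilde{M}_B \phi$ decouples. I expect to find that the eigenvalues of $\widetilde{M}_B^{-1}\widetilde{\mathcal{L}}$ are the ratio $\frac{4m(\boldsymbol{\theta})}{m_s(\boldsymbol{\theta})} = \widetilde{Q}\widetilde{A}_s = m_r(\boldsymbol{\theta})$ with appropriate multiplicity, exactly mirroring the situation for the distributive relaxation in Theorem \ref{QDRsmoothing-theorem}. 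This is precisely the reduction that makes the Braess-Sarazin and distributive analyses coincide. Consequently the eigenvalues of $\widetilde{\mathcal{S}}_B$ all take the form $1 - \omega_B m_r(\boldsymbol{\theta})$.

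Once the eigenvalues are reduced to $1 - \omega_B m_r(\boldsymbol{\theta})$, the remaining step is routine: by Lemma \ref{lemma:mass-smoothing-factor}, minimizing $\max_{\boldsymbol{\theta}\in T^{\rm high}}|1 - \omega_B m_r(\boldsymbol{\theta})|$ over $\omega_B$ gives exactly $\frac{1}{3}$, attained uniquely at $\omega_B = \frac{3}{4}$, using that $m_r(\boldsymbol{\theta})$ ranges over $[\eta_1,\eta_2] = [8/9, 16/9]$ by Lemma \ref{lem:range-mr}. The optimal $\omega_B$ equioscillates the error at the two endpoints of this interval, which forces $|1 - \omega_B \eta_1| = |1 - \omega_B \eta_2|$ and yields the stated value. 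I would also remark here that, as claimed in the surrounding text, the factor $\alpha$ cancels in the eigenvalue expression (one can absorb it into $\omega_B$), so the choice $\alpha = 1$ is without loss of generality.

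The main obstacle I anticipate is verifying cleanly that the Schur-complement reduction yields eigenvalue $m_r$ with the full multiplicity needed, rather than introducing a spurious pressure-mode eigenvalue from the mismatch between the velocity blocks of $\widetilde{M}_B$ and $\widetilde{\mathcal{L}}$. Concretely, the subtlety is that the Schur complement of $\widetilde{M}_B$ is $\widetilde{B}(m_s/h^2)^{-1}\widetilde{B}^T$ whereas that of $\widetilde{\mathcal{L}}$ involves $(4m/h^2)^{-1}$, so one must check that the pressure-component eigenvalue also equals $1 - \omega_B m_r$ and not some other quantity. I expect this to resolve because the exact solve in \eqref{solution-of-precondtion} makes the pressure update consistent with the same ratio $4m/m_s$, but confirming it requires carefully tracking the $3\times 3$ symbol rather than appealing to the decoupling by analogy with the distributive case.
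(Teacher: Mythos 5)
Your overall skeleton (compute the generalized eigenvalues of $\widetilde{M}_B^{-1}\widetilde{\mathcal{L}}$, then reduce to Lemma \ref{lemma:mass-smoothing-factor} via the range $[8/9,16/9]$ of $m_r$) is the same as the paper's, but your central intermediate claim is wrong. A direct computation, which you deferred, gives
\begin{equation*}
\det\big(\widetilde{\mathcal{L}}-\lambda \widetilde{M}_B\big) \;\propto\; 4m\,m_s\,(\lambda-1)^2\Big(\lambda-\frac{4m}{m_s}\Big),
\end{equation*}
so the eigenvalues of $\widetilde{M}_B^{-1}\widetilde{\mathcal{L}}$ are $1,\,1,\,m_r(\boldsymbol{\theta})$ --- not $m_r$ with full multiplicity. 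Because $\widetilde{M}_B$ agrees with $\widetilde{\mathcal{L}}$ in the $\widetilde{B}$, $\widetilde{B}^T$ and zero blocks and the Schur complement \eqref{solution-of-precondtion} is solved exactly, the pressure/constraint modes are reproduced exactly (eigenvalue $1$, algebraic multiplicity two); only the divergence-free velocity mode carries the ratio $m_r=\widetilde{Q}\widetilde{A}_s$ from \eqref{eq:ratio-mr}. The situation therefore does \emph{not} mirror Theorem \ref{QDRsmoothing-theorem}, where all eigenvalues equal $1-\omega_D m_r$. To your credit, your final paragraph flags exactly this as the risky step, but your anticipated resolution --- that the pressure-component eigenvalue "also equals $1-\omega_B m_r$" --- is false.

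The theorem nevertheless survives, but only after an extra check your proposal omits: the eigenvalue $1$ contributes $|1-\omega_B|$ to the spectral radius of $\widetilde{\mathcal{S}}_B$, and one must verify this does not dominate. At $\omega_B=\tfrac{3}{4}$ one has $|1-\omega_B|=\tfrac14<\tfrac13$; more generally, since $1\in[8/9,16/9]$, the quantity $|1-\omega_B|$ is always bounded by $\max_{m_r\in[8/9,16/9]}|1-\omega_B m_r|$, so the optimization over $\omega_B$ is still governed by the equioscillation $|1-\omega_B\eta_1|=|1-\omega_B\eta_2|=\tfrac13$, uniquely at $\omega_B=\tfrac34$. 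This is precisely the step the paper's proof performs ("For the eigenvalue $1$, we have $|1-\omega_B\cdot 1|<\tfrac13$"). Your remark that $\alpha$ merely rescales $\omega_B$ and may be set to $1$ is correct and consistent with the paper's remark. As written, though, your proof asserts a false eigenstructure, so you must add the determinant computation and the eigenvalue-$1$ check for the argument to be valid.
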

%
%====================================================================================================
 \begin{proof}
Using  \eqref{eq:scaled-symbol-Q-inverse},  the symbol of $M_{B}$ is  
\begin{equation*}\label{Symbol-PrecondM}
   \widetilde{M}_{B}(\theta_1,\theta_2) =\frac{1}{h^2}\begin{pmatrix}
       m_s & 0 & i 2h \sin\frac{\theta_1}{2}  \\
      0 &  m_s & i 2h \sin\frac{\theta_2}{2} \\
      -i 2h \sin\frac{\theta_1}{2}   &  -i 2h \sin\frac{\theta_2}{2} &0
    \end{pmatrix}.
\end{equation*}
The symbol of the error-propagation matrix for   $Q$-BSR  is $\mathcal{\widetilde{S}}_{B}( \omega_{B},\boldsymbol{\theta})=I- \omega_{B} \widetilde {M}_{B}^{-1}\widetilde{\mathcal{L}}$. 
To minimize the spectral radius of $\mathcal{\widetilde{S}}_{B}$ with respect to high frequencies, we first compute the eigenvalues of $\widetilde {M}_{B}^{-1}\widetilde{\mathcal{L}}$.   It can be easily shown that  the determinant of $\widetilde{L}-\lambda \widetilde{M}_{B}$ is 
\begin{equation*}
 4m m_s \big(\lambda-1\big)^2\left (\lambda-\frac{4 m }{m_s}\right).
\end{equation*}
Thus, the eigenvalues of $\widetilde { M}_{B}^{-1}\mathcal{\widetilde{ L}}$ are  $1, 1$,  and $\displaystyle\frac{4m }{m_s }.$

Note that $\frac{4m }{m_s }=\widetilde{Q}\widetilde{A}_s$.  As in Lemma \ref{lemma:mass-smoothing-factor}, we know that $\mu_{\rm opt}(I- \omega_{B}\widetilde{Q}\widetilde{A}_s)=\frac{1}{3}$  with $\omega_B=\frac{3}{4}$. For the eigenvalue 1, we have  $|1-\omega_B 1|<\frac{1}{3}$, where $\omega_B=\frac{3}{4}$. This means that 
  $\displaystyle \min_{\omega_{B}}  \max_{\boldsymbol{\theta}\in T^{{\rm high}}} \big|\lambda(\mathcal{\widetilde{S}}_{B}( \omega_{B},\boldsymbol{\theta}))\big|=\frac{1}{3}$ with $\omega_B=\frac{3}{4}$. 
\end{proof}
%===================================================================================================
We stress that the  optimal smoothing factor $\frac{1}{3}$ is much smaller than  $\frac{3}{5}$  using $C={\rm diag}(A)$ discussed in \cite{he2018local}.  It is interesting that  $\frac{1}{3}$ is also the optimal smoothing factor of BSR for the Stokes equations discretized by stabilized $Q_1-Q_1$ elements \cite{he2019local}.

\begin{remark}
One can consider weighted mass approximations, that is, $\alpha $ is varying in \eqref{Precondtion}. However, from the proof of Theorem \ref{EX-smoothing-theorem}, the scaling does not affect the optimal smoothing factor, which only changes the optimal damping parameter $\omega_B$.
\end{remark}
%==================================================================================================

 {\bf Inexact Braess-Sarazin  relaxation}:  The exact  Braess-Sarazin algorithm requires an exact inversion of the Schur complement \eqref{solution-of-precondtion}, which is very expensive.  Our previous work \cite{he2018local} shows that   inexact BSR can achieve the  same convergence factor as the exact BSR  for MAC discretization of the Stokes equations. 
Therefore, we explore mass-based  inexact Braess-Sarazin relaxation, using a single sweep of weighted Jacobi iteration with weight $ \omega_J$ to approximate the solution of \eqref{solution-of-precondtion}. We are asking whether the inexact version is able  to achieve the same smoothing factor of $\frac{1}{3}$. Here, we do not  pursue theoretical analysis. Instead, we numerically  check the performance of the inexact BSR with the requirement $\frac{\omega_B}{\alpha_B}=\frac{3}{4}$ because $1-\frac{\omega_B}{\alpha_B}\frac{m}{m_s}$ is the eigenvalue for both exact and inexact version. We report the results in Section \ref{sec:Numer}.  The results show that inexact   BSR   of a two-grid method can obtain the same convergence  factor, $\frac{1}{3} $, with   $\alpha_B=1.4, \omega_B=\frac{3}{4}\alpha_B$ and $ \omega_J=1$. Since the inexact Braess-Sarazin relaxation is simple to implement, avoiding  computing the exact inversion of the Schur complement,  we recommend inexact mass-based Braess-Sarazin relaxation.

\begin{remark}

We mention that  \cite{he2018local} has studied the  $C$ in \eqref{Precondtion} to be (symmetric) Gauss-Seidel relaxation ((S)GS) for the MAC discretization of the Stokes equations, called (S)GS-BSR. For GS-BSR,  LFA  predicts the  optimal smoothing factor of 0.50 and  the two-grid convergence factor of 0.45 with optimal
parameters. For SGS-BSR, the optimal smoothing factor is  0.25 and the LFA-predicted two-grid
convergence factor is  0.20 with optimal parameters. For $C ={\rm diag}(A)$,   LFA predicts  the optimal smoothing factor of 0.60 for both exact and  inexact BSR versions, where  two-grid convergence factor is the same as optimal smoothing factor.
\end{remark}

%============================================================================================
\subsection{Mass-based Uzawa-type relaxation}\label{Uzawa-type}

 In this subsection, we consider   mass-based $\sigma$-Uzawa ($Q$-$\sigma$-Uzawa ), that is,  the inverse of $C$ in \eqref{Uzawa-Precondtion} is defined as \eqref{eq:C-inverse-Q} and  $D =\sigma^{-1}I$.   The symbol of  $M_{U}$ is given by
\begin{equation*} 
   \widetilde{M}_{U}(\theta_1,\theta_2) =\frac{1}{h^2}\begin{pmatrix}
        \alpha_{U}m_s & 0 & 0  \\
      0 &   \alpha_{U}m_s  & 0 \\
      -i 2h \sin\frac{\theta_1}{2}   &  -i 2h \sin\frac{\theta_2}{2} &-\sigma^{-1}h^2
    \end{pmatrix}.
    \end{equation*}
Furthermore, the determinant of $\widetilde{\mathcal{L}}-\lambda \widetilde{M}_{U}$ is 
\begin{equation*}
\frac{ (\alpha_{U}m_s)^{2}}{\sigma}  \big(\lambda-\frac{4m(\boldsymbol{\theta})}{m_s\alpha_{U}}\big)\bigg(\lambda^{2}-\frac{1+\sigma}{\alpha_{U}m_s}4m(\boldsymbol{\theta})\lambda+\frac{4m(\boldsymbol{\theta})\sigma}{\alpha_{U}m_s}\bigg) .
\end{equation*}
Using $m_r(\boldsymbol{\theta})=\frac{4m}{m_s}$ defined in \eqref {eq:ratio-mr}, we can rewrite the above determinant  as
\begin{equation}\label{eq:sigma-Uzawa-roots}
 \frac{ (\alpha_{U}m_s)^{2}}{\sigma} \big(\lambda-\frac{m_r}{ \alpha_{U}}\big)\bigg(\lambda^{2}-\frac{(1+\sigma)m_r }{\alpha_{U} }\lambda+\frac{m_r\sigma}{\alpha_{U} }\bigg),
\end{equation}
which will be more convenient for a later discussion. We follow  \cite{he2018local} to carry out  the analysis of Uzawa relaxation scheme.

From \eqref{eq:sigma-Uzawa-roots}, we see one eigenvalue of $\widetilde{M}_{U}^{-1}\widetilde{\mathcal{L}}$ is  $\lambda_{*}:=\frac{m_r}{\alpha_{U}}$. From Lemma \ref{lem:range-mr} and Lemma \ref{lemma:mass-smoothing-factor}, we know the optimal smoothing factor for these modes is  
\begin{equation*} 
  \left|1-\frac{\omega_{U}}{\alpha_{U}}\eta_1\right|=\left|1-\frac{\omega_{U}}{ \alpha_{U}}\eta_2\right|=\frac{1}{3},
\end{equation*}
with $\frac{\omega_{U}}{\alpha_{U}}=\frac{3}{4}$.

Next, we explore the optimal smoothing factor for other two eigenvalues of $\widetilde{M}_{U}^{-1}\widetilde{\mathcal{L}}$ in  \eqref{eq:sigma-Uzawa-roots}. We denote by $\lambda_{1},\lambda_{2}$ the roots of
\begin{equation}\label{quadratic-function-root-U}
  d(\lambda)=\lambda^{2}-\frac{(1+\sigma)m_r(\boldsymbol{\theta})}{\alpha_{U}}\lambda+\frac{m_r(\boldsymbol{\theta})\sigma}{\alpha_{U}}.
\end{equation}
The discriminant of $d(\lambda)$  is 
\begin{equation*} 
  \Delta (\alpha_{U},\sigma)=\frac{m_r(\boldsymbol{\theta})(1+\sigma)^{2}}{\alpha_{U}^{2}}\bigg(m_r(\boldsymbol{\theta})-\frac{4\alpha_{U}\sigma}{(1+\sigma)^{2}}\bigg).
\end{equation*}
Let the two roots of $\Delta=0$ with respect to $m_r$ be
\begin{equation*} 
  m_{1}=0,\quad m_{2}=\frac{4\alpha_{U}\sigma}{(1+\sigma)^{2}}.
\end{equation*}

From  \eqref{quadratic-function-root-U}, we have
\begin{eqnarray}
  \lambda_{1}+\lambda_{2}&=&\frac{m_r(\boldsymbol{\theta})(1+\sigma)}{\alpha_{U}}, \label{U-roots-sum} \\
  \lambda_{1}\lambda_{2}&=&\frac{m_r(\boldsymbol{\theta})\sigma}{\alpha_{U}}, \label{U-roots-times}\\
 \lambda_{1,2}&=&\frac{(1+\sigma)m_r(\boldsymbol{\theta})}{2\alpha_{U}}\left(1\pm \sqrt{1-\frac{m_2}{m_r(\boldsymbol{\theta})}}\right).\label{roots-formulation}
\end{eqnarray}
The value $m_2$ plays an important role in determining  the optimal smoothing factor with respect to eigenvalues $\lambda_1$ and $\lambda_2$.  There are three cases for  $\lambda_1$ and $\lambda_2$: 
\begin{enumerate}
\item only complex eigenvalues, that is, $m_2>\max{m_r}=\eta_2$.
\item only real eigenvalues, that is, $m_2< \min{m_r}=\eta_1$.
\item giving both real and complex eigenvalues, that is, $\frac{8}{9}=\eta_1\leq m_2\leq \eta_2=\frac{16}{9}$.
\end{enumerate}
 To minimize the smoothing factor, we explore the optimal smoothing factor for the above three cases. Then, we choice the best one.

In order to discuss the complex eigenvalues, we set  $\Upsilon (m_r(\boldsymbol{\theta}))$ to be the magnitude of   $1-\omega_{U}\lambda_{1}$ at frequency
 $\boldsymbol{\theta}$. Using \eqref{U-roots-sum} and \eqref{U-roots-times} gives
\begin{align*}
 \Upsilon^{2}(m_r(\boldsymbol{\theta})) &= (1-\omega_{U}\lambda_{1})(1-\omega_{U}\lambda_{2}) \\
              &=1-(\lambda_1+\lambda_2)\omega_{U}+\lambda_1\lambda_2\omega_{U}^{2} \\
             &=1+\frac{\omega_{U}}{\alpha_{U}}(\omega_{U}\sigma-\sigma-1)m_r(\boldsymbol{\theta}).  
\end{align*}
%
%We first give a general result that can be applied in the third case, when $\frac{8}{9}<m_2<\frac{16}{9}$.
%===============================================================
\begin{theorem}\label{thm:complex-max-value}
Assume that $m_{2}\geq \frac{8}{9}$ and let $\gamma=\min\{m_{2},16/9 \}$. For $m_r(\boldsymbol{\theta})\in[8/9, \gamma]$, eigenvalues $\lambda_{1}$ and $\lambda_{2}$ are complex conjugates and the smoothing factor for these modes is
  \begin{equation*}\label{general-complex-smoothing}
    \mu^C=\max_{m_r(\boldsymbol{\theta})\in[8/9,\gamma]} \Upsilon(m_r(\boldsymbol{\theta}))=\sqrt{1+\frac{8\omega_U(\omega_U\sigma-\sigma-1)}{9\alpha_U}}\geq\sqrt{1-\frac{8}{9\gamma}},
  \end{equation*}
where the equality is achieved  provided that 
  \begin{equation*} 
\frac{\omega_{U}}{\alpha_{U}}(\omega_U\sigma-\sigma-1)=-\frac{1}{\gamma}.
\end{equation*}
\end{theorem}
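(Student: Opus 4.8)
The plan is to exploit the fact that the formula for $\Upsilon^2(m_r(\boldsymbol{\theta}))$ derived immediately before the statement, namely $\Upsilon^2 = 1 + \frac{\omega_U}{\alpha_U}(\omega_U\sigma - \sigma - 1)\,m_r(\boldsymbol{\theta})$, is an \emph{affine} function of $m_r(\boldsymbol{\theta})$. Writing $c := \frac{\omega_U}{\alpha_U}(\omega_U\sigma - \sigma - 1)$ for its slope, the maximization of $\Upsilon$ over $m_r(\boldsymbol{\theta}) \in [8/9,\gamma]$ reduces to evaluating the affine map $m_r \mapsto 1 + c\,m_r$ at one of the two endpoints, so the whole argument is essentially bookkeeping about the sign of $c$.

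First I would settle which endpoint is active. Since we only care about parameter choices for which the relaxation actually smooths, we may restrict to $c < 0$; otherwise $\Upsilon \geq 1$ throughout the interval and the scheme fails to reduce these modes. With $c < 0$ the map $1 + c\,m_r$ is strictly decreasing, so its maximum over $[8/9,\gamma]$ is attained at the left endpoint $m_r = 8/9$. Substituting yields $\mu^C = \sqrt{1 + \tfrac{8c}{9}} = \sqrt{1 + \tfrac{8\omega_U(\omega_U\sigma - \sigma - 1)}{9\alpha_U}}$, which is the claimed closed form.

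For the lower bound, the key observation is that $\Upsilon^2(m_r(\boldsymbol{\theta}))$ is by construction the squared modulus $|1 - \omega_U\lambda_1|^2$ of a complex number, because $\lambda_1,\lambda_2$ form a complex-conjugate pair throughout $[8/9,\gamma]$. This is guaranteed by the assumption $m_2 \geq 8/9$ together with $\gamma \leq m_2$, which forces the discriminant $\Delta \leq 0$ on the interval. Hence $\Upsilon^2 \geq 0$ everywhere on $[8/9,\gamma]$; evaluating at the right endpoint $m_r = \gamma$ gives $1 + c\,\gamma \geq 0$, i.e.\ $c \geq -\tfrac{1}{\gamma}$. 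Feeding this into the closed form produces $1 + \tfrac{8c}{9} \geq 1 - \tfrac{8}{9\gamma}$, and therefore $\mu^C \geq \sqrt{1 - \tfrac{8}{9\gamma}}$. Monotonicity of the square root keeps every inequality reversible, so equality holds if and only if $c = -\tfrac{1}{\gamma}$, which is exactly the stated condition $\frac{\omega_U}{\alpha_U}(\omega_U\sigma - \sigma - 1) = -\tfrac{1}{\gamma}$.

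The main obstacle, and really the only non-mechanical point, is recognizing where the lower bound comes from: it is not produced by the endpoint maximization itself but by the geometric constraint that $\Upsilon^2$, being a squared magnitude on the complex-eigenvalue regime, cannot become negative at the right endpoint $\gamma$. I would therefore take care to verify that $\gamma$ lies inside or on the boundary of this regime in both cases: when $\gamma = m_2$ the two eigenvalues coalesce into a repeated real value and $\Upsilon^2 = (1 - \omega_U\lambda_1)^2 \geq 0$ still holds, while when $\gamma = 16/9 < m_2$ they remain a genuine conjugate pair, so the nonnegativity of $\Upsilon^2(\gamma)$ is legitimate in all situations.
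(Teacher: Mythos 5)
Your proposal is correct and follows essentially the same route as the paper's proof: treat $\Upsilon^2$ as an affine function of $m_r(\boldsymbol{\theta})$ whose slope $c=\frac{\omega_U}{\alpha_U}(\omega_U\sigma-\sigma-1)$ must be negative for smoothing (the paper derives this from requiring $\Upsilon^2<1$), take the maximum at the left endpoint $m_r=8/9$, and obtain the lower bound from $\Upsilon^2(\gamma)\geq 0$, which gives $c\geq -\frac{1}{\gamma}$ with equality in the bound exactly when $c=-\frac{1}{\gamma}$. The only (minor) difference is in how nonnegativity at $\gamma$ is justified: you argue conceptually that $\Upsilon^2$ is a squared modulus throughout the complex-eigenvalue regime since $\gamma\leq m_2$ keeps the discriminant nonpositive (correctly noting the coalescent case $\gamma=m_2$), whereas the paper verifies the same fact algebraically via the chain $\Upsilon^2(\gamma)\geq\Upsilon^2(m_2)=\left(1-\frac{2\omega_U\sigma}{1+\sigma}\right)^2\geq 0$.
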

%=================================================================
\begin{proof}
When $m_r(\boldsymbol{\theta})\in[8/9, \gamma]$, $\Delta(\alpha_{U},\sigma)\leq 0$ and $|1-\omega_{U}\lambda_{1}|=|1-\omega_{U}\lambda_{2}|=\Upsilon(m_r(\boldsymbol{\theta}))$.  To guarantee convergence,  we require $\Upsilon^2(m_r(\boldsymbol{\theta}))<1$, which leads to  $\frac{\omega_U(\omega_U\sigma-\sigma-1)}{\alpha_U}<0$. Since $\gamma=\min\{m_2,16/9\}$, we have
\begin{eqnarray*}
\Upsilon^2(\gamma)&=&1+\frac{\omega_{U}}{\alpha_{U}}(\omega_{U}\sigma-\sigma-1)\gamma\\
&\geq&1+\frac{\omega_{U}}{\alpha_{U}}(\omega_{U}\sigma-\sigma-1)m_2\\
&=&\left(1-\frac{2\omega_U\sigma}{1+\sigma}\right)^2 \geq0.
\end{eqnarray*}
It follows that
\begin{equation*} 
\frac{\omega_{U}}{\alpha_{U}}(\omega_U\sigma-\sigma-1)\geq-\frac{1}{\gamma}.
\end{equation*}
Furthermore,
\begin{equation*}
  \max_{m_r(\boldsymbol{\theta})\in[8/9,\gamma]}\Upsilon(m_r(\boldsymbol{\theta}))=\Upsilon(8/9)=\sqrt{1+\frac{8\omega_{U}}{9\alpha_{U}}(\omega_U\sigma-\sigma-1)}
  \geq\sqrt{1-\frac{8}{9\gamma}},
\end{equation*}
where the equality is achieved if and only if $\frac{\omega_U(\omega_U\sigma-\sigma-1)}{\alpha_U}=\frac{-1}{\gamma}$.
\end{proof}
 
%==================================================================================
 Based on the above results, we are ready to give a lower bound on the smoothing factor for the case that $m_2>\eta_2=\frac{16}{9}$.
 %================================================================================
\begin{theorem}\label{m2>2}
If $m_{2}=\frac{4\alpha_{U}\sigma}{(1+\sigma)^{2}}> \eta_2=\frac{16}{9}$, then the optimal smoothing factor for  $Q$-$\sigma$-Uzawa relaxation  is larger than  $\frac{\sqrt{2}}{2}$.
\end{theorem}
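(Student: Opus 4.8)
The plan is to exploit the hypothesis $m_2 = \frac{4\alpha_U\sigma}{(1+\sigma)^2} > \eta_2 = \frac{16}{9}$ to collapse the analysis entirely into the complex-eigenvalue regime already handled by Theorem \ref{thm:complex-max-value}, and then to sharpen the resulting bound into a \emph{strict} one using the non-negativity of the squared magnitude $\Upsilon^2$.

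First I would check that under the hypothesis \emph{every} high frequency yields complex conjugate eigenvalues $\lambda_1,\lambda_2$. The discriminant factors as $\Delta(\alpha_U,\sigma)=\frac{m_r(\boldsymbol{\theta})(1+\sigma)^2}{\alpha_U^2}\big(m_r(\boldsymbol{\theta})-m_2\big)$, and since $m_r(\boldsymbol{\theta})\in[8/9,16/9]$ by Lemma \ref{lem:range-mr} while $m_2>16/9$, the factor $m_r(\boldsymbol{\theta})-m_2$ is strictly negative for all $\boldsymbol{\theta}\in T^{\rm high}$; hence $\Delta<0$ throughout and $\lambda_1,\lambda_2$ are genuinely non-real. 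Consequently $\gamma=\min\{m_2,16/9\}=16/9$, Theorem \ref{thm:complex-max-value} applies over the full range $m_r(\boldsymbol{\theta})\in[8/9,16/9]$, and it gives
\begin{equation*}
\mu^C=\max_{m_r\in[8/9,16/9]}\Upsilon(m_r)\ \ge\ \sqrt{1-\frac{8}{9\gamma}}=\sqrt{1-\frac{1}{2}}=\frac{\sqrt{2}}{2}.
\end{equation*}
Because the complex modes are a subset of all modes, the local smoothing factor for any admissible parameters obeying the hypothesis satisfies $\mu_{{\rm loc},U}\ge\mu^C\ge\frac{\sqrt{2}}{2}$, and hence so does the optimum taken over this parameter regime.

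The main work is to upgrade this to a strict inequality. Theorem \ref{thm:complex-max-value} records that equality in $\mu^C\ge\sqrt{1-8/(9\gamma)}$ holds only if $\frac{\omega_U}{\alpha_U}(\omega_U\sigma-\sigma-1)=-\frac{1}{\gamma}=-\frac{9}{16}$. I would rule this out: substituting that value into $\Upsilon^2(16/9)=1+\frac{16}{9}\cdot\frac{\omega_U}{\alpha_U}(\omega_U\sigma-\sigma-1)$ gives $\Upsilon^2(16/9)=1-1=0$. But $\Upsilon^2(16/9)=|1-\omega_U\lambda_1|^2$ evaluated at $m_r=16/9$, where $\Delta<0$ strictly, so $\lambda_1$ is non-real; then $|1-\omega_U\lambda_1|^2=0$ would force $\lambda_1=1/\omega_U\in\mathbb{R}$, a contradiction. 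Thus the equality configuration is impossible, $\mu^C>\frac{\sqrt{2}}{2}$ strictly, and therefore $\mu_{{\rm opt},U}>\frac{\sqrt{2}}{2}$.

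I expect this strictness step to be the only delicate point, since the coarse bound $\mu^C\ge\frac{\sqrt{2}}{2}$ is immediate from Theorem \ref{thm:complex-max-value}. An equivalent and fully self-contained alternative is a monotonicity argument: for a convergent scheme the coefficient $\frac{\omega_U}{\alpha_U}(\omega_U\sigma-\sigma-1)$ is negative, so $\Upsilon^2(m_r)$ is strictly decreasing in $m_r$, whence $\Upsilon^2(16/9)>\Upsilon^2(m_2)=\big(1-\frac{2\omega_U\sigma}{1+\sigma}\big)^2\ge 0$ because $16/9<m_2$; this yields $\frac{\omega_U}{\alpha_U}(\omega_U\sigma-\sigma-1)>-\frac{9}{16}$ and hence $\mu^C=\Upsilon(8/9)>\frac{\sqrt{2}}{2}$. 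The non-convergent case, where that coefficient is non-negative, gives $\mu^C\ge 1$ trivially, so the bound holds in all cases.
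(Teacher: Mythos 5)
Your proposal is correct and follows essentially the same route as the paper: both invoke Theorem \ref{thm:complex-max-value} with $\gamma=\frac{16}{9}$ to obtain $\mu^C\geq\frac{\sqrt{2}}{2}$, and then rule out the equality case by showing $\Upsilon^2(\eta_2)=0$ is incompatible with $m_2>\frac{16}{9}$. The only difference is the device used for this last step: the paper derives an algebraic contradiction, $\bigl(\frac{\omega_U\sigma}{1+\sigma}-\frac{1}{2}\bigr)^2<0$ --- which is precisely your alternative monotonicity argument repackaged, since it amounts to violating $\Upsilon^2(m_2)=\bigl(1-\frac{2\omega_U\sigma}{1+\sigma}\bigr)^2\geq0$ --- whereas your primary argument observes, a bit more cleanly, that $|1-\omega_U\lambda_1|=0$ would force $\lambda_1=1/\omega_U\in\mathbb{R}$, impossible because the discriminant is strictly negative at $m_r=\frac{16}{9}<m_2$.
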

%=================================================================================
\begin{proof}
  From Theorem \ref{thm:complex-max-value} with $\gamma =\frac{16}{9}$, we know the smoothing factor for the complex modes is 
  \begin{equation}\label{eq:large-than-root-half}
  \mu^C=\Upsilon(8/9)\geq\sqrt{1-\frac{8}{9\gamma}}=\sqrt{1-\frac{8}{9 \cdot \frac{16}{9} } }  =\frac{\sqrt{2}}{2},
\end{equation}
where the equality is achieved on condition that $\Upsilon^2(\eta_2)=0$.  Next, we   show that $\Upsilon^2(\eta_2)>0$. 
 We proceed to prove this by contradiction.  Supposing  that $\Upsilon^2(\eta_2)=1+\frac{\omega_{U}}{\alpha_{U}}(\omega_{U}\sigma-\sigma-1)\frac{16}{9}=0$, we have $\frac{\alpha_{U}}{\omega_{U}(\sigma+1-\omega_{U}\sigma)}=\frac{16}{9}$.  Using $m_2=\frac{4\alpha_{U}\sigma}{(1+\sigma)^{2}}>\frac{16}{9}$ gives
  \begin{equation*}
    \frac{4\alpha_{U}\sigma}{(1+\sigma)^{2}}> \frac{\alpha_{U}}{\omega_{U}(\sigma+1-\omega_{U}\sigma)},
  \end{equation*}
  which is equivalent to 
  \begin{equation*}
    \bigg(\frac{\omega_{U}\sigma}{1+\sigma}-\frac{1}{2}\bigg)^2<0.
  \end{equation*}
 Thus, $\mu^C$ is large than $\frac{\sqrt{2}}{2}$  in \eqref{eq:large-than-root-half}.
\end{proof}
%==================================================================================

Next, we explore  the case where $m_2\leq \eta_2=\frac{16}{9}$. For $m_r(\boldsymbol{\theta})\in[m_2,16/9]$, the two roots, $\lambda_{1,2}$, are real. Using  (\ref{roots-formulation}) gives 
\begin{eqnarray*}
|1-\omega_{U}\lambda_1|&=&
\left|1-\frac{(1+\sigma)\omega_{U}}{2\alpha_{U}}m_r(\boldsymbol{\theta})\bigg(1+\sqrt{1-\frac{m_2}{m_r(\boldsymbol{\theta})}}\bigg)\right|,\\
|1-\omega_{U}\lambda_2|&=&
\left|1-\frac{(1+\sigma)\omega_{U}}{2\alpha_{U}}m_r(\boldsymbol{\theta})\bigg(1-\sqrt{1-\frac{m_2}{m_r(\boldsymbol{\theta})}}\bigg)\right|.
\end{eqnarray*}
Define
\begin{eqnarray*}
  \chi_{+}(m_r(\boldsymbol{\theta})) &=& \frac{m_r(\boldsymbol{\theta})}{2}\left(1+\sqrt{1-\frac{m_2}{m_r(\boldsymbol{\theta})}}\right),\label{root-positive} \\
\chi_{-}(m_r(\boldsymbol{\theta})) &=& \frac{m_r(\boldsymbol{\theta})}{2}\left(1-\sqrt{1-\frac{m_2}{m_r(\boldsymbol{\theta})}}\right).\label{root-negative}
\end{eqnarray*}
It can  be shown that function $\chi_{+}(m_r(\boldsymbol{\theta}))$ is an increasing function of $m_r(\boldsymbol{\theta})$ for $m_r(\boldsymbol{\theta})\in[m_2,16/9]$, and $\chi_{+}(m_r(\boldsymbol{\theta}))$ is a  decreasing function, see  \cite{he2018local}.  Thus, 
\begin{eqnarray*}
  &\chi_{+}(m_r(\boldsymbol{\theta}))_{\rm max}&=\chi_{+}(16/9)=\frac{8}{9}\left(1+\sqrt{1-\frac{9 m_2}{16}}\right)=:\chi_{1},\label{R+Max}\\
  & \chi_{-}(m_r(\boldsymbol{\theta}))_{\rm min}&=\chi_{-}(16/9)=\frac{8}{9}\left(1-\sqrt{1-\frac{9m_2}{16}}\right) =:\chi_{2}.\label{R-Min}
\end{eqnarray*}
It follows that the smoothing factor for the two real eigenvalues in this case is
\begin{eqnarray*}
 & \mu^R &:=\max_{\boldsymbol{\theta}\in T^{{\rm high}}}\big|\lambda(\mathcal{\widetilde{S}}_{U}(\alpha_{U},\omega_{U},\sigma,\boldsymbol{\theta}))\big|, \\
  &&={\rm max}\bigg\{\big|1-\frac{(1+\sigma)\omega_U}{\alpha_U}\chi_1\big|,\big|1-\frac{(1+\sigma)\omega_U}{\alpha_U}\chi_2\big|\bigg\}.
\end{eqnarray*}
The above expression can be further simplified to
\begin{equation}\label{compare-R1-R2}
\mu^R=\left\{
  \begin{aligned}
    &\frac{(1+\sigma)\omega_{U}}{\alpha_{U}}\chi_1-1, \quad {\rm if}\quad \frac{(1+\sigma)\omega_{U}}{\alpha_{U}}\geq \frac{9}{8}. \\
    &1-\frac{(1+\sigma)\omega_{U}}{\alpha_{U}}\chi_2, \quad {\rm if}\quad \frac{(1+\sigma)\omega_{U}}{\alpha_{U}}\leq \frac{9}{8}.
  \end{aligned}
               \right. \\
  \end{equation}
We are now able to give a lower bound on the optimal smoothing factor for the case where $m_2\leq\frac{8}{9}$.
\begin{theorem}\label{U-all-complex}
If $m_{2}=\frac{4\alpha_{U}\sigma}{(1+\sigma)^{2}}\leq\frac{8}{9}$, then the optimal smoothing factor for $Q$-$\sigma$-Uzawa  relaxation is not less than $\frac{\sqrt{2}}{2}$.
\end{theorem}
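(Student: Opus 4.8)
The plan is to exploit that the hypothesis $m_2\le \tfrac{8}{9}=\eta_1$ pushes \emph{every} high-frequency mode into the real-root regime, and then to show that those two real branches alone already cost at least $\tfrac{\sqrt2}{2}$, however the parameters are tuned. First I would note, via the discriminant $\Delta(\alpha_U,\sigma)=\frac{m_r(\boldsymbol{\theta})(1+\sigma)^2}{\alpha_U^2}\big(m_r(\boldsymbol{\theta})-m_2\big)$ together with Lemma~\ref{lem:range-mr} (so that $m_r(\boldsymbol{\theta})\ge \tfrac89\ge m_2$), that $\Delta\ge 0$ for all $\boldsymbol{\theta}\in T^{\rm high}$; hence $\lambda_1,\lambda_2$ from \eqref{quadratic-function-root-U} are real across the whole band and \eqref{compare-R1-R2} describes the contribution $\mu^R$ of these modes. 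Since $\mu^R$ is the maximum of $|\lambda(\widetilde{\mathcal{S}}_U)|$ over only two of the three eigenvalue families, it is a genuine lower bound for $\mu_{\rm loc}$, and it suffices to prove $\mu^R\ge \tfrac{\sqrt2}{2}$.

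The main work is a one-parameter min--max. For a clean lower bound I would anchor it at the single frequency with $m_r(\boldsymbol{\theta})=\eta_2=\tfrac{16}{9}$ (attained in $T^{\rm high}$ by Lemma~\ref{lem:range-mr}); there \eqref{roots-formulation} shows the two error-propagation eigenvalues are exactly $1-t\chi_1$ and $1-t\chi_2$, where $t:=\frac{(1+\sigma)\omega_U}{\alpha_U}$ and $\chi_1,\chi_2$ are the quantities $\chi_\pm(16/9)$ defined just before \eqref{compare-R1-R2}. The one identity I need is $\chi_1+\chi_2=\tfrac{16}{9}$. Minimizing $\max\{|1-t\chi_1|,|1-t\chi_2|\}$ over $t$ through the case split in \eqref{compare-R1-R2}, the increasing branch $t\chi_1-1$ (for $t\ge \tfrac98$) and the decreasing branch $1-t\chi_2$ (for $t\le \tfrac98$) meet at the balance point $t=\frac{2}{\chi_1+\chi_2}=\tfrac98$, where both equal
\[
\tfrac98\chi_1-1=1-\tfrac98\chi_2=\sqrt{1-\tfrac{9m_2}{16}}.
\]
Hence $\mu^R\ge \sqrt{1-\tfrac{9m_2}{16}}$ for every admissible triple $(\alpha_U,\omega_U,\sigma)$.

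Finally I would close the estimate by monotonicity in $m_2$: the bound $\sqrt{1-\tfrac{9m_2}{16}}$ is decreasing in $m_2$, so on the admissible range $m_2\le \tfrac89$ it is smallest at $m_2=\tfrac89$, giving $\sqrt{1-\tfrac12}=\tfrac{\sqrt2}{2}$. Thus $\mu_{\rm loc}\ge \mu^R\ge \tfrac{\sqrt2}{2}$ uniformly over parameters with $m_2\le\tfrac89$, and taking the minimum over parameters yields $\mu_{{\rm opt},U}\ge\tfrac{\sqrt2}{2}$. I expect the only delicate point to be the frequency bookkeeping: rather than re-deriving the full band-maximum of \eqref{compare-R1-R2}, it is safest to anchor the lower bound at the single mode $m_r=\tfrac{16}{9}$ and verify there that the two error eigenvalues are precisely $1-t\chi_1$ and $1-t\chi_2$; the remaining optimization is the routine min--max of two affine functions of $t$.
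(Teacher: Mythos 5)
Your proof is correct and follows essentially the same route as the paper: both arguments reduce to the two real eigenvalue branches through $\chi_{1,2}=\chi_{\pm}(16/9)$, balance the two affine functions of $t=\frac{(1+\sigma)\omega_U}{\alpha_U}$ at $t=\frac{9}{8}$, and arrive at the bound $\sqrt{1-\frac{9m_2}{16}}\geq\frac{\sqrt{2}}{2}$. If anything, your bookkeeping is slightly more careful than the paper's: you keep $m_2$ general and make the monotonicity in $m_2$ explicit (and anchor the lower bound at the single attained frequency $m_r=\frac{16}{9}$), whereas the paper substitutes the extreme case $m_2=\frac{8}{9}$ directly into \eqref{compare-R1-R2} and leaves that monotonicity implicit.
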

\begin{proof}
When $m_2\leq\frac{8}{9}$, $\lambda_1, \lambda_2$ are all real.  From  (\ref{compare-R1-R2}), the smoothing factor for $m_r(\boldsymbol{\theta})\in[8/9,16/9]$ is given by
  \begin{equation*}
  \mu^R=\left\{
  \begin{aligned}
    &\frac{8(1+\sigma)\omega_{U}}{9\alpha_{U}}(1+\frac{\sqrt{2}}{2})-1, \quad {\rm if}\quad \frac{(1+\sigma)\omega_{U}}{\alpha_{U}}\geq \frac{9}{8}. \\
    &1-\frac{8(1+\sigma)\omega_{U}}{9\alpha_{U}}(1-\frac{\sqrt{2}}{2}), \quad {\rm if}\quad \frac{(1+\sigma)\omega_{U}}{\alpha_{U}}\leq \frac{9}{8}.
  \end{aligned}
               \right. \\
  \end{equation*}
Note that when $\frac{(1+\sigma)\omega_{U}}{\alpha}=\frac{9}{8}$, $\mu^R$ achieves  its minimum value of $\frac{\sqrt{2}}{2}$.  However,  the conditions that $\frac{(1+\sigma)\omega_{U}}{\alpha}=\frac{9}{8}$ and $m_2=\frac{4\alpha_{U}\sigma}{(1+\sigma)^{2}}\leq\frac{8}{9}$ might not be satisfied at the same time. This implies that  the optimal smoothing factor may be larger than  $\frac{\sqrt{2}}{2}$.
\end{proof}
From previous  discussions, we know that  when $m_2>\frac{16}{9}$ or $m_2 \leq \frac{8}{9}$, the optimal smoothing factor is at least $\frac{\sqrt{2}}{2}$.  Next, we will show that  the global optimal smoothing factor for all choices of $m_2$ occurs   when $\frac{8}{9}\leq m_2\leq \frac{16}{9}$.  
%=======================================================================================
\begin{theorem}\label{inside-optimal}
The optimal smoothing factor for $Q$-$\sigma$-Uzawa relaxation over all possible parameters is   
 \begin{eqnarray*}\label{mixed-optimal-factor}
  \mu_{{\rm opt}, U}&=&   \min _{(\alpha_{U},\omega_{U},\sigma)}\max_{ \boldsymbol{\theta}\in T^{{\rm high}}}\left\{\left|1-\omega_{U}\lambda_{*}\right|,\,  \mu^R,\, \mu^C\right\}
  =\sqrt{1-\frac{9m_{{\rm opt}}}{16 }}= \sqrt{\frac{1}{3}}\approx 0.577,
\end{eqnarray*}
where the minimum is attained provided that $m_2=m_{{\rm opt}}=\frac{32}{27}$ with
\begin{eqnarray*}
     \frac{1}{ 3 \mu_{{\rm opt},U} } \leq& \omega_{U}  & \leq \frac{2}{3(1-\mu_{{\rm opt},U})}, \\
   &\alpha_{U}&=\frac{8\omega^2_U}{3(3\omega_U-1)}, \\
  & \sigma&=\frac{1}{3 \omega_{U}-1}. 
\end{eqnarray*}
 \end{theorem}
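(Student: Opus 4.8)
The plan is to reduce the three-parameter minimization to a transparent two-variable min--max, after first discarding the unfavorable regimes. By Theorem \ref{m2>2} and Theorem \ref{U-all-complex}, whenever $m_2 > 16/9$ or $m_2 \leq 8/9$ the smoothing factor is at least $\frac{\sqrt2}{2} > \sqrt{1/3}$, so any optimal choice must satisfy $8/9 \leq m_2 \leq 16/9$. In this regime the spectrum of $\widetilde M_U^{-1}\widetilde{\mathcal L}$ splits, through \eqref{eq:sigma-Uzawa-roots}, into the isolated mode $\lambda_* = m_r/\alpha_U$, a band of complex conjugate pairs on $m_r \in [8/9, m_2]$ governing $\mu^C$ (Theorem \ref{thm:complex-max-value}), and a band of real roots on $m_r \in [m_2, 16/9]$ governing $\mu^R$ (equation \eqref{compare-R1-R2}). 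The target is $\min\max\{|1-\omega_U\lambda_*|, \mu^R, \mu^C\}$, and since $\max\{|1-\omega_U\lambda_*|,\mu^R,\mu^C\} \geq \max\{\mu^R,\mu^C\}$ pointwise, I would obtain the lower bound by ignoring the $\lambda_*$ mode, while the matching upper bound must respect all three.

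The decisive simplification is the elimination of the redundant parameters. Writing $s = (1+\sigma)\omega_U/\alpha_U$ and using $m_2 = 4\alpha_U\sigma/(1+\sigma)^2$ to eliminate $\omega_U$, a direct computation collapses the quantity $P := \frac{\omega_U}{\alpha_U}(\omega_U\sigma-\sigma-1)$ appearing in $\mu^C$ to
\begin{equation*}
P = \frac{m_2 s^2}{4} - s,
\end{equation*}
so that both $\mu^C = \sqrt{1 + \tfrac89 P}$ and, via the endpoint values $\chi_{1,2} = \frac89(1\pm t)$ with $t = \sqrt{1 - 9m_2/16}$, the real-band factor $\mu^R$ depend only on $(m_2, s)$. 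The problem thereby reduces to $\min_{(m_2,s)} \max\{\mu^R, \mu^C\}$ over $m_2 \in [8/9, 16/9]$.

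To solve this two-variable problem I would first fix $m_2$ and minimize over $s$. The factor $\mu^R = \max\{|1-\frac{8s}{9}(1+t)|, |1-\frac{8s}{9}(1-t)|\}$ is minimized at $s = 9/8$, where it equals $t$, and there $\mu^C = \sqrt{9m_2/32}$. Since $t$ decreases and $\sqrt{9m_2/32}$ increases in $m_2$, their maximum is smallest when they coincide, at $m_2 = 32/27$, where both equal $\sqrt{1/3}$. I would then confirm global optimality of the max: for $m_2 \leq 32/27$ one has $\max\{\mu^R,\mu^C\} \geq \mu^R \geq t \geq \sqrt{1/3}$ because $s = 9/8$ already minimizes $\mu^R$ over all $s$; for $m_2 \geq 4/3$ the identity $(\mu^C)^2 - \frac13 = \frac{2}{9}(m_2 s^2 - 4s + 3) \geq 0$ holds for every $s$ (the quadratic has nonpositive discriminant); and for $32/27 < m_2 < 4/3$ the set of $s$ with $\mu^C < \sqrt{1/3}$ and the set with $\mu^R < \sqrt{1/3}$ are disjoint intervals, so the two factors cannot both fall below $\sqrt{1/3}$. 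This yields $\mu_{\rm opt,U} \geq \sqrt{1/3}$.

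For the matching upper bound and the parameter description, I would exhibit the family $\sigma = 1/(3\omega_U-1)$, $\alpha_U = 8\omega_U^2/(3(3\omega_U-1))$, which produces exactly $s = 9/8$, $m_2 = 32/27$, $P = -3/4$, hence $\mu^R = \mu^C = \sqrt{1/3}$; it remains to force the isolated mode below the bound, and with $r = \omega_U/\alpha_U = \frac{3(3\omega_U-1)}{8\omega_U}$ the requirement $\max\{|1-\frac89 r|, |1-\frac{16}{9}r|\} \leq \sqrt{1/3}$ solves into $\frac{1}{3\mu_{\rm opt,U}} \leq \omega_U \leq \frac{2}{3(1-\mu_{\rm opt,U})}$, giving the stated one-parameter family. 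The main obstacle I anticipate is exactly the coupling in the lower bound: the naive separate estimates $\mu^R \geq \sqrt{1-9m_2/16}$ and $\mu^C \geq \sqrt{1-8/(9m_2)}$ are attained at different values of $s$ and would falsely suggest a factor below $\sqrt{1/3}$, so the argument must keep the joint $(m_2,s)$ dependence and verify the disjointness of the two ``good'' $s$-intervals on the intermediate band, the one genuinely non-routine estimate in the proof.
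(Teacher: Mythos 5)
Your proposal is correct and takes essentially the same route as the paper: the same three-way spectral split into $\lambda_*$, the complex band, and the real band; the same two-variable reduction (your $(m_2,s)$ with $P=\tfrac{m_2 s^2}{4}-s$ is exactly the paper's $(x,y)$ with $x=s$, $y=\tfrac{m_2 s^2}{4}$); the same balancing $\mu^R=\mu^C$ at $s=\tfrac{9}{8}$ giving $m_{\rm opt}=\tfrac{32}{27}$; and the same $\omega_U$-window obtained by forcing the isolated mode below $\sqrt{1/3}$. The one step you leave asserted---disjointness of the good $s$-intervals for $\tfrac{32}{27}<m_2<\tfrac{4}{3}$---is precisely what the paper proves by completing the square: $\mu^C\leq\sqrt{1/3}$ forces $y\leq s-\tfrac{3}{4}$, whence $\mu^R\geq\tfrac{8}{9}\sqrt{\left(s-\tfrac{9}{8}\right)^2+\tfrac{27}{64}}\geq\sqrt{1/3}$ uniformly in $m_2$, an inequality that also subsumes your cases (a) and (b) in a single computation.
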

%=========================================================================================
\begin{proof}
We first consider $m_{2}\in[8/9, 16/9]$ and  $\frac{(1+\sigma)\omega_U}{\alpha_U}=\frac{9}{8}$. In this situation, the two expressions in (\ref{compare-R1-R2}) coincide and $m_2=\frac{4\alpha_U\sigma}{(1+\sigma)^2}=\frac{4\cdot 8^2}{9^2}\frac{\omega^2_U\sigma }{\alpha_U}$.

For $m_r(\boldsymbol{\theta})\in[m_2,16/9]$, from  (\ref{compare-R1-R2}), we have
\begin{equation*}
\mu^R=\frac{(1+\sigma)\omega_{U}}{\alpha_{U}}\chi_1-1=\sqrt{1-\frac{9 m_2}{16}}=\sqrt{1-\frac{16\omega^2_U\sigma}{9\alpha_U}}.
\end{equation*}
For $m_r(\boldsymbol{\theta})\in[8/9, m_2]$,  Theorem \ref{thm:complex-max-value} tells us that
\begin{equation*}
\mu^C=\sqrt{1+\frac{8 \omega_U(\omega_U\sigma-\sigma-1)}{9\alpha_U}}=\sqrt{ \frac{8\omega^2_U\sigma}{9\alpha_U}}.
\end{equation*}
Since $\mu^R$ is a decreasing function of $\frac{\omega^2_U\sigma}{\alpha_U}$ and $\mu^C$ is an increasing function of $\frac{\omega^2_U\sigma}{\alpha_U}$, the optimal smoothing factor over the modes bounded by these factors is achieved if and only if $\mu^R=\mu^C$ and is given by
\begin{equation*}
\mu_{{\rm opt}, U}=\displaystyle \min _{(\alpha_{U},\omega_{U},\sigma)}\max_{m_r(\boldsymbol{\theta})\in[8/9, 16/9]}
\left\{\sqrt{1- \frac{16\omega^2_U\sigma}{9\alpha_U}},\,\sqrt{ \frac{8\omega^2_U\sigma}{9\alpha_U}}\right\}=\sqrt{\frac{1}{3}},
\end{equation*}
provided that
\begin{eqnarray}
  \frac{\omega^2_U\sigma}{\alpha_U}&=&\frac{3}{8},\label{optimal-condion-1} \\
  \frac{(1+\sigma)\omega_U}{\alpha_U}&=&\frac{9}{8}.\label{optimal-condion-2}
\end{eqnarray}
Furthermore, $m_{{\rm opt}}:=m_2=\frac{4\cdot 8^2}{9^2} \frac{\omega^2_U\sigma}{\alpha_U}=\frac{32}{27}$. Since $\sqrt{\frac{1}{3}} < \frac{\sqrt{2}}{2}$, it means that $m_2 \in [8/9, 16/9] $ gives a better smoothing factor.

Next, we show the above choice, $\frac{(1+\sigma)\omega_U}{\alpha_U}=\frac{9}{8}$, gives the best possible bound over these two modes, then we go back to consider the eigenvalues $1-\omega_U\frac{m(\boldsymbol{\theta})}{\alpha_U}$.

Let  $x=\frac{(1+\sigma)\omega_U}{\alpha_U}$ and $y=\frac{\omega^2_U\sigma}{\alpha_U}$. Then, $m_2=\frac{4\alpha_U\sigma}{(1+\sigma)^2}=\frac{4y}{x^2}$.
 Assume that $\mu^C\leq\sqrt{\frac{1}{3}}$,  that is, 
 \begin{equation*}
 \sqrt{1+\frac{8 \omega_U(\omega_U\sigma-\sigma-1)}{9\alpha_U}}=\sqrt{1-\frac{8x}{9}+\frac{8y}{9}}\leq\sqrt{\frac{1}{3}},
 \end{equation*}
which leads to $y\leq x-\frac{3}{4}$.
 
Now, we check $\mu^R$  under the condition that $y\leq x-\frac{3}{4}$. If $x>\frac{9}{8}$,  from (\ref{compare-R1-R2}), we have
\begin{eqnarray*}
\mu^R&=&\frac{(1+\sigma)\omega_U}{\alpha_U} \frac{8}{9}\left(1+\sqrt{1-\frac{9m_2}{16}}\right)-1\nonumber\\
  &=&\frac{8}{9}\left(x+\sqrt{x^2-9y/4}\right)-1\nonumber\\
  &\geq& \frac{8}{9}  \sqrt{x^2-\frac{9}{4}(x-3/4)} \nonumber\\
  &=& \frac{8}{9} \sqrt{(x- 9/8)^2+27/64}\nonumber\\
  &>&\sqrt{\frac{1}{3}}.\nonumber
\end{eqnarray*}
This means that when $x>1$, the optimal smoothing factor is larger than $\sqrt{\frac{1}{3}}$.

If $x<\frac{9}{8}$,  from (\ref{compare-R1-R2}),  we have
\begin{eqnarray*}
\mu^R&=&1-\frac{(1+\sigma)\omega_U}{\alpha_U} \frac{8}{9}\left(1-\sqrt{1-\frac{9 m_2}{16 }}\right)\nonumber\\
  &=&1-\frac{8}{9} x+\frac{8}{9} \sqrt{x^2-9y/4}\nonumber\\
  &\geq& \frac{8}{9}  \sqrt{x^2-9/4\cdot (x-3/4)} \nonumber\\
  &=& \frac{8}{9} \sqrt{(x-9/8)^2+27/64}\nonumber\\
  &>&\sqrt{\frac{1}{3}}.\nonumber
\end{eqnarray*}
Consequently,  the optimal smoothing factor is larger than $\sqrt{\frac{1}{3}}$ for $x<\frac{9}{8}$.

Thus, over all the choices of $x$, the optimal smoothing factor for these modes ($\lambda_{1,2}$) is $\mu_{{\rm opt},U}=\sqrt{\frac{1}{3}}$ with $x=\frac{(1+\sigma_U)\omega_U}{\alpha_U}=\frac{9}{8}$.

Now, we need to include $\lambda_{*}=\frac{m_r(\boldsymbol{\theta})}{\alpha_U}$. For  $\lambda_{*}$,  we have shown that 
\begin{equation*}
 \min_{(\alpha_{U},\omega_{U},\sigma
)}\max_{ \boldsymbol{\theta}\in T^{{\rm high}}}\big|1-\frac{\omega_{U}}{\alpha_{U}}m_r(\boldsymbol{\theta})\big|=\frac{1}{3}< \sqrt{\frac{1}{3}}.
\end{equation*}
To obtain optimal result for all modes, we need
\begin{equation*}
  |1-\frac{8\omega_{U}}{9\alpha_U}|\leq\mu_{{\rm opt},U}\quad  {\rm and }\,\quad |1-\frac{16 \omega_{U}}{9\alpha_U}|\leq\mu_{{\rm opt},U},
\end{equation*}
that is,
\begin{equation}\label{third-cond}
\frac{9}{8}(1-\mu_{{\rm opt},U})\frac{1}{\omega_U}\leq\frac{1}{\alpha_U}\leq\frac{9(1+\mu_{{\rm opt},U}) }{16}\frac{1}{\omega_U}.
\end{equation}
Simplifying (\ref{optimal-condion-1}) and (\ref{optimal-condion-2}) gives 
\begin{eqnarray}
  \alpha_{U}&=&\frac{8\omega^2_U}{3(3\omega_U-1)},\label{parameter-condition-2}\\
   \sigma&=&\frac{1}{3 \omega_{U}-1}.\label{parameter-condition-3}
\end{eqnarray}
Substituting  (\ref{parameter-condition-2})  into  (\ref{third-cond}) leads to
\begin{equation}\label{parameter-condition-1}
  \frac{1}{ 3 \mu_{{\rm opt},U} }\leq \omega_{U}\leq \frac{2}{3(1-\mu_{{\rm opt},U})}.
\end{equation}
Note that parameters $\omega_U=1, \alpha_U=\frac{4}{3},\sigma=\frac{1}{2}$ satisfy \eqref{parameter-condition-2},  \eqref{parameter-condition-3}, and \eqref{parameter-condition-1}.
\end{proof}
%=============================================================================================

We summarize some results for  Uzawa-type relaxation scheme for MAC discretization of the Stokes equations. In \cite{MR3217219}, LFA for GS-Uzawa predicts a two-grid convergence factor of 0.87 with 2 sweeps of GS used on the velocity block in each sweep of Uzawa. Moreover, LFA predictions for SGS-Uzawa show a smoothing factor of 0.5 when a single sweep of SGS is used, and an LFA-predicted two-grid convergence factor of 0.44 in \cite{MR3217219}. In \cite{he2018local}, LFA predicts two-grid convergence factor of  $\sqrt{\frac{3}{5}} \approx 0.775$ for $\sigma$-Uzawa. 
 
%\cite{CH2021addVanka}
\begin{remark}
From the analysis of the optimal smoothing factors for the mass-based relaxation schemes, we see how important it is to choose the approximation $C$. One can consider   $C$ to be  additive Vanka approaches introduced in \cite{CH2021addVanka}. It might give a slightly better optimal convergence factor because the optimal smoothing factor for the element-wise Vanka is 0.280 applied to the Laplacian, see \cite{CH2021addVanka}.  
\end{remark}
%=============================================================================================
\section{Numerical experiments}\label{sec:Numer}
In this section, we present   multigrid performance to confirm our  theoretical optimal  smoothing factors. 
 Here, we use the defects  $d_{h}^{(k)}=b-\mathcal{L}_h x_k (k=1,2,\cdots)$ to experimentally measure the actual convergence factor given by
${\rho}^{(k)}_{m}=\sqrt[k]{\frac{\|d_{h}^{(k)}\|_{2}}{\|d_{h}^{(0)}\|_{2}}}$ (see \cite{MR1807961}), with $k=100$.  For simplicity,  let $\rho_m={\rho}^{(100)}_{m}$. To test the multigrid convergence factor, we consider the homogeneous problem ($b = 0$) with  periodic boundary conditions and the discrete solution $x_{h} \equiv 0$. The initial guess $x^{(0)}$ is chosen randomly. The coarsest grid is a $4\times4$ mesh. Rediscretization is used to define the coarse-grid operator. For the restriction operator,  we consider 6-point restrictions for the velocity unknowns,  and a 4-point cell-centered restriction for the pressure unknowns.  For the prolongation of the corrections, we apply the corresponding adjoint operators multiplied by a factor of 4. For more details, we refer to \cite{MR1049395}.   Let $\nu_1$ and $\nu_2$ be the number of pre- and postsmoothing iterations, respectively, and $\nu=\nu_1+\nu_2$.  We report the measured  two-grid and   $V$-cycle (or $W$-cycle) performance with different meshsize $h$ and  different  numbers of smoothing steps for $Q$-DR, $Q$-IBSR and $Q$-$\sigma$-Uzawa. 
 
%=================================================================================================
\subsection{Multigrid performance}
We first report the multigrid performance for $Q$-DR in Table~\ref{QDR-PBC}. The two-grid  numerical results show good agreement between predicted convergence  $\mu^{\nu}_{\rm opt}$ and the true performance for $\nu<4$, and there is a tiny degradation when  $\nu=4$. Furthermore, we display the results for $V$-cycle, seeing a tiny degradation for  $\nu=3$ and $\nu=4$.  However, this difference  can be ignored.

%======================DWJ--PBC===================================================
\begin{table}[H]
 \caption{Multigrid convergence factor for $Q$-DR with $\mu_{\rm opt}=\frac{1}{3}$}
\centering
\begin{tabular}{ l|  c c c c }
\hline
$\nu=\nu_1+\nu_2$  & 1    & 2   &3    & 4  \\
\hline
$\mu^{\nu}_{\rm opt}$                   &0.333     &0.111       & 0.037       &0.012       \\
\hline
\hline
    \multicolumn{5} {c}  {\footnotesize{Two-grid}} \\
\hline
$\rho_m, h=1/32$             &0.328      &0.109        & 0.038     &0.028         \\
 
$\rho_m, h=1/64$            &0.326       & 0.108       &0.038      &0.030     \\
\hline
 
    \multicolumn{5} {c}  {\footnotesize{V-cycle}} \\
\hline
$\rho_m, h=1/128$            &0.324      &0.108        &0.053      & 0.041        \\
 
$\rho_m, h=1/256$            &0.324      &0.108        &0.053      &0.041        \\
\hline
\end{tabular}\label{QDR-PBC}
\end{table}

%=======================================================================================================
Table~\ref{IBSR-PBC} shows  the measured convergence factors for $Q$-IBSR with $\alpha_B=1.4, \omega_B=\frac{3}{4}\alpha_B$, and $\omega_J=1$, where we found these parameters are typically best. For the two-grid method, the actual convergence factors agree with the LFA predicted results $\mu^{\nu}_{\rm opt}$, except for $\nu=4$.  Although we see degradation in performance for $V$-cycles   with $\nu>1$, this degradation can be mitigated by using $W$-cycles.
 %=======================================================================================================
\begin{table}[H]
 \caption{Multigrid convergence factor for $Q$-IBSR with $\alpha_B=1.4, \omega_B=\frac{3}{4}\alpha_B,\omega_J=1$ and $\mu_{\rm opt}=\frac{1}{3}$}
\centering
\begin{tabular}{ l|c c c c }
\hline
$\nu=\nu_1+\nu_2$  & 1    & 2   &3    & 4  \\
\hline
$\mu^{\nu}_{\rm opt}$      &0.333     &0.111       & 0.037       &0.012       \\
\hline
 \hline
    \multicolumn{5} { c }  {\footnotesize{Two-grid}} \\
\hline
$\rho_m, h=1/32$             &0.323       &0.110      & 0.037       &0.027        \\
 
$\rho_m, h=1/64$             &0.326       &0.109      &0.037        &0.027        \\
\hline
 
    \multicolumn{5} { c }  {\footnotesize{V-cycle}} \\
\hline
$\rho_m, h=1/128$            &0.326       &0.127      &0.081      &0.062        \\
 
$\rho_m, h=1/256$            &0.326       &0.178      &0.105      &0.080        \\
\hline
 
    \multicolumn{5} { c }  {\footnotesize{W-cycle}} \\
\hline
$\rho_m, h=1/128$            &0.326       &0.109      &0.037      &0.027       \\

$\rho_m, h=1/256$            &0.326       &0.109     &0.037     &0.027        \\
\hline
\end{tabular}\label{IBSR-PBC}
\end{table}

%==============================================================================================

In Table~\ref{Uzawa-PBC}, we show the multigrid performance for $Q$-$\sigma$-Uzawa.  We see that the  measured convergence factors of two-grid and $W$-cycles methods match well with the LFA predictions $\mu^{\nu}_{\rm opt}$. However,  there is  a large degradation in performance for  $V$-cycles. Therefore, in practice,  $W$-cycles are more robust.

%=================================================================================================
\begin{table}[H]
 \caption{Multigrid convergence factor for $Q$-$\sigma$-Uzawa with $\alpha_U=1, \omega_U=\frac{4}{3},\sigma=\frac{1}{2}$ and $\mu_{\rm opt}=\sqrt{\frac{1}{3}}$}
\centering
\begin{tabular}{ l|c c c c }
\hline
$\nu=\nu_1+\nu_2$  & 1    & 2   &3    & 4  \\
\hline
$\mu^{\nu}_{\rm opt}$                   &0.577      &0.333       &0.193      &0.111       \\
\hline
 \hline
    \multicolumn{5} { c }  {\footnotesize{Two-grid}} \\

\hline
$\rho_m, h=1/32$            &0.562       &0.322       &0.187     &0.108        \\
 
$\rho_m, h=1/64$            &0.559      &0.321       &0.186      &0.107        \\
\hline
 
    \multicolumn{5} { c }  {\footnotesize{V-cycle}} \\
\hline
$\rho_m, h=1/128$            &0.558      &0.668       &0.401      &0.236        \\
 
$\rho_m, h=1/256$            &0.744       &0.932       &0.541     &0.303        \\
\hline
 
    \multicolumn{5} { c }  {\footnotesize{W-cycle}} \\
\hline
$\rho_m, h=1/128$            &0.558       &0.321       &0.186      &0.106        \\

$\rho_m, h=1/256$            &0.558       &0.321       &0.186      &0.107       \\
\hline
\end{tabular}\label{Uzawa-PBC}
\end{table}

To sum up, for the three mass-based relaxation methods,  we observe that the LFA predicted optimal smoothing factors agree with the measured two-grid convergence
factors. There is a tiny degradation in $V$-cycles performance   for $Q$-IBSR and $Q$-$\sigma$-Uzawa schemes, however,   $W$-cycles almost  have the same performance as  those of two-grid methods.  

\begin{remark}
Note that \cite{MR1049395} and \cite{he2018local} consider bilinear interpolation for velocity
(12pts) and pressure (16pts).  Here, we do not consider this choice because the current simple   interpolation works well. 
\end{remark}

\subsection{Cost comparison}

We summarize our   optimal smoothing factors for the mass-based relaxation schemes in Table~\ref{parameters-choosing}, including the results from \cite{he2018local} as a comparison.  Table \ref{parameters-choosing} provides a complete overview of the  smoothing  properties of our new relaxation schemes, which outperform these discussed in \cite{he2018local}.
 
Note that an estimate of the computational work is presented in \cite{he2018local} for multigrid methods with   distributive  relaxation,  inexact Braess-Sarazin,  and $\sigma$-Uzawa. The computational work in  \cite{he2018local} is true for the mass-based relaxation schemes proposed in this work.  Thus, we directly apply the results in \cite{he2018local}  to here. Note that  $\mu_{{\rm opt}, Q-\sigma-Uzawa}^2=\mu_{{\rm opt},Q-IBSR}$ (see Table \ref{parameters-choosing}), which means that one cycle of multigrid with $Q$-IBSR brings about the same total reduction in error as two  cycles using  $Q$-$\sigma$-Uzawa . Since  $\mu_{{\rm opt}, Q-IBSR}=\mu_{{\rm opt},Q-DR}$,  the costs of $Q$-IBSR and $Q$-DR are the same. The cost of two  sweeps of $Q$-$\sigma$-Uzawa is slightly more than one sweep of $Q$-IBSR, and  the cost of one sweep of  $Q$-DR  is a slightly more than the cost of one sweep of $Q$-IBSR. As a result, $Q$-IBSR  is the most efficient  one among these three.

%===========================================================================================================
\begin{table}[H]
%\footnotesize
 \caption{Optimal smoothing factors, see Definition \ref{def-opt}}
 \centering
\begin{tabular}{ c | c  c   c   c  c|}
\hline
 Old Relaxation \cite{he2018local} & DWJ  & BSR  & IBSR     & $\sigma$-Uzawa    \\
 
$\mu_{{\rm opt}}$       & $\frac{3}{5}$        &$\frac{3}{5}$    & $\frac{3}{5}$         &$\sqrt{\frac{3}{5}}$   \\
\hline
 
New Relaxation  & $Q$-DR  & $Q$-BSR  & $Q$-IBSR     & $Q$-$\sigma$-Uzawa    \\
 
$\mu_{{\rm opt}}$       & $\frac{1}{3}$        &$\frac{1}{3}$    & $\frac{1}{3}$         &$\sqrt{\frac{1}{3}}$   \\
\hline
\end{tabular}\label{parameters-choosing}
\end{table}

\section{Conclusion}
\label{sec:concl}
 
In this work, we consider multigrid methods for solving the Stokes equations discretized by the staggered finite difference method.  We use mass matrix derived from finite element methods to approximate   the inverse of the Laplacian coming from the Stokes equations. Based on this approximation, we propose  three novel  mass-based  block-structured relaxation schemes: mass-based distributive relaxation,  mass-based Braess-Sarazin relaxation, and mass-based $\sigma$-Uzawa relaxation. Then, we present a theoretical analysis of  the optimal smoothing factor of LFA for multigrid methods  for these three relaxation schemes.  We obtain fast efficient smoothing properties. Specifically, our theoretical results show that   the mass-based  distributive relaxation and Braess-Sarazin  relaxation have  optimal smoothing factor $\frac{1}{3}$,  and the mass-based Uzawa relaxation has optimal smoothing factor $\sqrt{\frac{1}{3}}$.  Numerical results   validate  our theoretical predictions, showing that the two-grid convergence factors are the same as the optimal smoothing factors.  The main advantages of these mass-based relaxation schemes are that  the mass matrix is sparse, the implementation is easy, and there is no need to compute the inverse of a matrix. Our findings here  give us a better understanding of the construction of efficient multigrid methods for the Stokes equations. It may be useful for more complex problems, such as Navier-Stokes equations, which remains a topic for  future work.
 
\section*{Acknowledgments}
The author  would like to thank Chen Greif for his careful proofreading and helpful comments
on an earlier version of this paper. 
%===================================================================================================================================
\bibliographystyle{siam}
\bibliography{MAC_ref}
\end{document}